\newtheorem{prop}[theorem]{{Proposition}}
\newtheorem{example}[theorem]{Example}
\newtheorem{remark}[theorem]{Remark}
\numberwithin{equation}{section}
\crefname{hypothesis}{Hypothesis}{Hypotheses}
\title{Optimization via conformal Hamiltonian systems on manifolds\thanks{Submitted to the editors 31/07/2023.}}
\author{Marta Ghirardelli\thanks{Norwegian University of Science and Technology, NTNU Trondheim, Norway 
  (\email{marta.ghirardelli@ntnu.no}).}}
\begin{document}

\maketitle

\begin{abstract}
In this work we propose a method to perform optimization on manifolds. We assume to have an objective function $f$ defined on a manifold and think of it as the potential energy of a mechanical system. By adding a momentum-dependent kinetic energy we define its Hamiltonian function, which allows us to write the corresponding Hamiltonian system. We make it conformal by introducing a dissipation term: the result is the continuous model of our scheme. We solve it via splitting methods (Lie-Trotter and leapfrog): we combine the RATTLE scheme, approximating the conserved flow, with the exact dissipated flow. The result is a conformal symplectic method for constant stepsizes. We also propose an adaptive stepsize version of it. We test it on an example, the minimization of a function defined on a sphere, and compare it with the usual gradient descent method.
\end{abstract}

\begin{keywords}
Optimization, Numerical methods, Differential geometry, Conformal Hamiltonian systems, Manifold
\end{keywords}

\begin{MSCcodes}
65L05, 37M15, 65K10, 53C18
\end{MSCcodes}

\section{Introduction}
Over the last few decades, optimization methods for problems on manifolds have been gaining more and more popularity \cite{absil2009optimization}, \cite{boumal2023introduction}. In many cases, they can be seen as an alternative to solving constrained optimization problems in Euclidean spaces. More precisely, the manifold setting allows to include the constraints in a more natural geometric way. When considering applications to deep learning, constraints on the parameter space may sometimes be useful to prevent the problem of exploding or vanishing gradients, which often occurs e.g. during the training of recurrent neural networks. It would then be desirable to choose compact manifolds, such as the Stiefel manifold or the orthogonal group. 

Still in the field of machine learning and deep learning, Gradient Descent (GD) algorithms are widely used as they require only first order information of the function $f$ to be minimized, and they are easy to implement. They are hence suitable for minimizing e.g. cost or loss function in such contexts, where one has to deal with large amounts of data. Unfortunately, these methods are proved to achieve linear convergence only, as long as $f$ is smooth and strongly convex, and can consequently be rather slow. Accelerated versions have been studied, especially in the Euclidean setting where we find Nesterov's accelerated gradient, Polyaks's heavy ball and a relativistic gradient descent method. The last two of these, in particular, can be seen as the discretization of some conformal Hamiltonian systems \cite{francca2020conformal} where $f$ plays the role of potential energy.
In the Euclidean case, conformal Hamiltonian systems have also been studied and used to formulate optimization schemes e.g. in \cite{francca2020conformal}, or in \cite{maddison2018hamiltonian} where the authors present methods that converge linearly, using first order gradient information and constant stepsize, on a broader class of functions than the smooth and strongly convex one. In the manifold setting, such systems have been used e.g. in \cite{garcia2020structure} but not with the aim of performing optimization. Other numerical methods that do perform optimization on manifolds can be found in literature, for example \cite{zhang2018towards, celledoni2020energy, celledoni2018dissipative}.

We here present a generalization of the approach of \cite{francca2020conformal} to manifolds. This entails the use of conformal Hamiltonian systems and corresponding structure preserving numerical schemes. We start with an objective function $f$ defined on a manifold $M$ and introduce the Hamiltonian function by adding a momentum dependent kinetic energy to the potential energy $f$. The related Hamiltonian vector field is defined on the cotangent bundle of $M$, namely $T^*M$, where an exact natural symplectic form $\omega$ can be defined. The manifold $(T^*M, \omega)$ is then called exact, and the conformal Hamiltonian system is recovered by adding to the conservative vector field, a dissipative one, the Liouville vector field \cite{mclachlan2001conformal}. The flow of this vector field can then be approximated via splitting methods (SM), composing the conservative flow with the dissipative one. 

In this work, we use the RATTLE scheme \cite{andersen1983rattle,3-540-30663-3}, to approximate the conservative part of the flow. For the dissipative part, the exact solution can be computed. The SM of both order one and two can then be obtained and we prove some properties they satisfy. In particular, we prove that both of them are $c$-conformal symplectic, the parameter $c$ being the same. In addition, we formulate an adaptive stepsize version following the idea in \cite{wadia2021optimization}. Numerical results are shown for a particular example, the minimization of a function defined on a sphere, for which the proposed method is compared with a standard GD scheme. We perform a somewhat detailed numerical study, aiming at optimizing the parameters for both GD and the proposed method: at least in this example, our conformal symplectic scheme appears to be a competitive alternative to the usual GD.


\section{Optimization through descent algorithm in the Euclidean setting}
\label{Optimization through descent algorithm in the Euclidean setting}
We consider the unconstrained minimization problem
\begin{equation}
\label{min pb}
    \min_{x\in\mathbb{R}^d} f(x),
\end{equation}
where $f:\mathbb{R}^d \to \mathbb{R}$ is a $\mathcal{C}^2$ differentiable convex function. In the context of machine learning or deep learning, one could think of $f$ as a cost or loss function to minimize. 

\subsection{Gradient descent (GD)}
An important family of optimization methods used to solve (\ref{min pb}) consists of the so called Gradient Descent methods (GD). Such schemes only require the knowledge of the first order derivatives of $f$, i.e. $\nabla f(x)$, and are widely used as they are easy to implement. They are proved to achieve linear convergence using constant stepsizes on functions $f$ which are smooth and strongly convex \cite{maddison2018hamiltonian}.

\subsubsection{Accelerated descent methods}
Basic gradient descent algorithms are computationally efficient but can be rather slow. For this reason, \textit{accelerated gradient methods} were developed, able to achieve best worst-case complexity bounds \cite{francca2020conformal}. Among them, two of the most well known are Polyak's heavy ball method \cite{polyak1964some} and Nesterov's accelerated gradient method\cite{Nesterov1983AMF}.

\subsubsection*{Polyak's heavy ball method}
The heavy ball scheme reads 
\begin{equation}
    \label{CM}
    p_{k+1} = \mu p_k - \epsilon \nabla f(q_k), \qquad q_{k+1} = q_k + p_{k+1},
\end{equation}
where $k=0,1,\dots$ is the iteration number, $\mu\in(0,1)$ the momentum factor, $\epsilon >0$ the learning rate and $f:\mathbb{R}^d\to\mathbb{R}$ the usual function to be minimized. This scheme is also known as the classical momentum method (CM) \cite{francca2020conformal}.

\subsubsection*{Nesterov's accelerated gradient method}
A possible formulation of Nesterov's accelerated gradient scheme reads 
\begin{equation}
    \label{NAG}
    p_{k+1} = \mu p_k - \epsilon \nabla f(q_k + \mu p_k), \qquad q_{k+1} = q_k + p_{k+1},
\end{equation}
with $k$, $\mu$, $\epsilon$ and $f$ as for CM \cite{francca2020conformal}.

\subsection{Conformal Hamiltonian systems in Euclidean space}
Considering the Euclidean space $\mathbb{R}^{2d}$ with coordinates $(q,p)$ and the standard symplectic form $\omega = \text{d}q \wedge \text{d}p$, a \textit{conformal Hamiltonian system} has the form
\begin{equation}
    \label{conformal VF Euclidean}
    \Dot{q} = \frac{\partial H}{\partial p}, \qquad \Dot{p} = - \frac{\partial H}{\partial q} + cp,
\end{equation}
where $H:\mathbb{R}^{2d}\to\mathbb{R}$ is the Hamiltonian. We consider Hamiltonians of the form 
\begin{equation*}
    H(q,p) = \frac{1}{2}\langle p,p\rangle + f(q)
\end{equation*}
where $f$ is a $\mathcal{C}^2$ convex function representing the potential energy. The property of such vector fields is that their flow $\varphi$ is \textit{conformal}, meaning that it satisfies
\begin{equation}
    \label{conformal flow Euclidean}
    \varphi^*\omega = e^{ct}\omega,
\end{equation}
hence the symplectic form contracts (expands) exponentially along the flow if $c<0$ $(c>0)$. In contrast to the classical Hamiltonian case, the energy is not preserved as long as $c\ne 0$. In particular, when $c<0$ we have a dissipation since 
\begin{equation*}
    \Dot{H}(q,p) = \langle \nabla_q H, \Dot{q} \rangle +  \langle \nabla_p H, \Dot{p} \rangle = c \langle p,p \rangle \le 0.
\end{equation*}
As a consequence, $H$ is a Lyapunov function for the system, which is driven to the stationary point $(q_*,0)$ where $q_*$ is a minimum of $f$  \cite{maddison2018hamiltonian}.

The CM method can be seen as the discretization of a conformal Hamiltonian system \cite{francca2020conformal}. With $H(q,p) = \frac{1}{2}\langle p,p\rangle + f(q)$, CM is in fact a reformulation of the system
\begin{equation}
    \begin{split}
        \Dot{q} &= \nabla_p H = p,\\
        \Dot{p} &= -\nabla_q H - \gamma p = - \nabla f(q) -\gamma p.
    \end{split}
\end{equation}
The parameter $\gamma >0$ is the \textit{damping} constant which controls the dissipation, and is related with the \textit{momentum} factor $\mu$ in (\ref{CM}) via \begin{equation*}
    \mu = e^{-\gamma h}.
\end{equation*}

One could also come up with other numerical schemes by replacing the usual kinetic energy with another momentum-dependant one $T(p)$ and discretizing the corresponding system. As an example \cite{francca2020conformal}, one could consider the \textit{relativistic} kinetic energy of a particle of mass $m$
\begin{equation*}
    T(p) = v_c \sqrt{||p||^2 + (m v_c)^2},
\end{equation*}
which takes into account the limiting speed of light $v_c$. The system would then be
\begin{equation*}
    \Dot{q} = \frac{v_c p}{\sqrt{||p||^2 + (mv_c)^2}}, \qquad
    \Dot{p} = \nabla f(q) - \gamma p,
\end{equation*}
and the considerations above would still hold as, for $c<0$,
\begin{equation*}
    \Dot{H}(q,p) = \frac{v_c c}{\sqrt{||p||^2 + (mv_c)^2}} \langle p,p \rangle \le 0.
\end{equation*}
Such a choice is clever when one has to deal with large gradients: here $p$ is normalized, $\Dot{q}$ remains bounded even if $p$ does not and divergence is controlled.

On the other hand, NAG is not the discretization of a conformal Hamiltonian system \cite{francca2020conformal}. The continuous equation of this scheme is a second order non-autonomous ODE \cite{su2015differential} of the form 
\begin{equation}
    \Ddot{x} + \frac{3}{t} \Dot{x} + \nabla f(x) = 0.
\end{equation}

\section{Conformal Hamiltonian systems on a manifold}
\label{Conformal Hamiltonian systems on a manifold}
Since the purpose of this work is to perform optimization on manifolds, in this section we give some definitions and results related to conformal Hamiltonian systems on a manifold (for further details we refer to \cite{mclachlan2001conformal}). Their discretization will then lead to optimization schemes on a manifold.\\

\textit{Conformal symplectic vector fields} are those which preserve a symplectic form $\omega$ up to a constant. \textit{Symplectic diffeomorphisms} on a manifold form a group. According to the Cartan classification\footnote{The Cartan classification only applies to \textit{transitive}, \textit{primitive} $\infty$-dimensional Lie pseudogroups $G$ on complex manifolds $M$ where:
\begin{itemize}
    \item \textit{transitive}: for all $x,y \in M$ exists $\phi\in G$ such that $\phi(x)=y$;
    \item \textit{primitive}: there is no foliation of $M$ which is left invariant by every element of $G$ (a Poisson manifold with its automorphisms is non-primitive);
    \item \textit{Lie} because they are defined as solution of PDEs;
    \item \textit{pseudo} because the solutions are only \textit{local} diffeomorphisms, so their composition is defined only when domains and ranges overlap.
\end{itemize} }, there exist six classes of groups of diffeomorphisms. Among them, two are called \textit{conformal groups} and we are interested in Diff$_\omega^c$ i.e. the diffeomorphisms preserving $\omega$ up to a constant.

\subsection{Conformal vector fields on exact manifolds}
Consider a symplectic manifold $(M, \omega)$\footnote{$\omega$ is a closed (i.e. $\mathbf{d}\omega = 0$) and non-degenerate 2-form.}. If $M$ is \textit{exact} then we can write $\omega = -\textbf{d}\theta$. \\
Let $H\in\mathcal{C}^\infty (M)$ and $X_H$ the corresponding Hamiltonian vector field. We denote
with $\mathbf{L}_{X} \omega$ the Lie derivative
\begin{equation*}
    \mathbf{L}_{X} \omega = \mathbf{i}_{X} \mathbf{d} \omega + \mathbf{d} \mathbf{i}_{X} \omega,
\end{equation*}
where $X$ is a vector field and $\mathbf{i}_{X}\omega$ the interior product,
\begin{equation*}
    \mathbf{i}_{X}\omega  = \omega(X,\cdot).
\end{equation*} 
\begin{definition}
    The vector field $X^c$ is \textit{conformal} with parameter $c\in\mathbb{R}$ if
    \begin{equation}
        \label{conformal VF}
        \mathbf{L}_{X^c} \omega = c\,\omega.
    \end{equation}
    The diffeomorphism $\varphi^c$ is \textit{conformal} with parameter $c\in\mathbb{R}$ if
    \begin{equation}
        \label{conformal diffeo}
        (\varphi^c)^* \omega = c\,\omega.
    \end{equation}
\end{definition}
The diffeomorphisms $\varphi^c$ form the pseudogroup Diff$_\omega^c$.
\begin{prop}[Proposition 1 in \cite{mclachlan2001conformal}]
    The followings hold:
    \begin{enumerate}
        \item The time-t flow of $X^c$ is conformal with parameter $e^{ct}$.
        \item $(M,\omega)$ admits a conformal vector field with parameter $c\ne0$ iff $M$ is exact.
        \item Given $H\in\mathcal{C}^\infty(M)$ and $M$ exact, the vector field $X_H^c$ defined by
        \begin{equation}
            \mathbf{i}_{X^c_H}\omega = \mathbf{d}H - c\,\theta
        \end{equation}
        is conformal.
        \item If, in addition, $H^1(M)=0$\footnote{i.e. every closed form is exact and the \textit{De Rham Cohomology} of order 1, $H^1$, is simply connected.}, then given $X^c$, there exists a function $H$ such that $X^c = X_H^c$, and the set of conformal vector fields on $M$ is given by
        \begin{equation}
            \{X_H + cZ \, : \, H\in\mathcal{C}^\infty(M)\},
        \end{equation}
        where $Z$ is the Liouville vector field \cite{libermann2012symplectic} defined by
        \begin{equation}
        \label{Liouville VF}
            \textbf{d}_Z \omega = -\theta.
        \end{equation}
    \end{enumerate}
\end{prop}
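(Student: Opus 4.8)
The plan is to deduce all four statements from three elementary ingredients: Cartan's magic formula $\mathbf{L}_X\omega = \mathbf{i}_X\mathbf{d}\omega + \mathbf{d}\,\mathbf{i}_X\omega$, the identities $\mathbf{d}\omega = 0$ and (on an exact manifold) $\omega = -\mathbf{d}\theta$, and the non-degeneracy of $\omega$, which makes $X \mapsto \mathbf{i}_X\omega$ a bijection between vector fields and $1$-forms on $M$. For the first item, let $\varphi_t$ be the (local) flow of $X^c$ and differentiate the pullback: $\frac{\mathrm{d}}{\mathrm{d}t}\varphi_t^*\omega = \varphi_t^*\mathbf{L}_{X^c}\omega = c\,\varphi_t^*\omega$ by the conformality hypothesis (\ref{conformal VF}). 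This is a linear ODE in the space of $2$-forms with initial datum $\varphi_0^*\omega = \omega$, whose unique solution is $\varphi_t^*\omega = e^{ct}\omega$; this is exactly (\ref{conformal diffeo}) with parameter $e^{ct}$. Note that completeness of $X^c$ is not needed, since the identity is pointwise.

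For the second item I would argue both directions. If $M$ is exact, write $\omega = -\mathbf{d}\theta$ and let $X^c$ be the unique vector field with $\mathbf{i}_{X^c}\omega = -c\,\theta$; then $\mathbf{L}_{X^c}\omega = \mathbf{d}\,\mathbf{i}_{X^c}\omega = -c\,\mathbf{d}\theta = c\,\omega$, so $X^c$ is conformal. Conversely, if $\mathbf{L}_{X^c}\omega = c\,\omega$ with $c\neq 0$, then $\mathbf{d}\omega = 0$ and Cartan's formula give $c\,\omega = \mathbf{d}\,\mathbf{i}_{X^c}\omega$, hence $\omega = -\mathbf{d}\theta$ with $\theta := -\tfrac1c\,\mathbf{i}_{X^c}\omega$, so $M$ is exact. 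The third item is then immediate: for $X_H^c$ defined by $\mathbf{i}_{X_H^c}\omega = \mathbf{d}H - c\,\theta$ we get $\mathbf{L}_{X_H^c}\omega = \mathbf{d}(\mathbf{d}H - c\,\theta) = -c\,\mathbf{d}\theta = c\,\omega$, using $\mathbf{d}\mathbf{d}=0$ and $\omega=-\mathbf{d}\theta$.

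For the fourth item, suppose $H^1(M) = 0$ and let $X^c$ be conformal. Consider the $1$-form $\alpha := \mathbf{i}_{X^c}\omega + c\,\theta$. Then $\mathbf{d}\alpha = \mathbf{d}\,\mathbf{i}_{X^c}\omega + c\,\mathbf{d}\theta = \mathbf{L}_{X^c}\omega - c\,\omega = 0$, so $\alpha$ is closed, hence exact: $\alpha = \mathbf{d}H$ for some $H\in\mathcal{C}^\infty(M)$. By non-degeneracy this says $\mathbf{i}_{X^c}\omega = \mathbf{d}H - c\,\theta$, i.e.\ $X^c = X_H^c$. Finally, since $\mathbf{i}_{X_H}\omega = \mathbf{d}H$ and the Liouville field satisfies $\mathbf{i}_Z\omega = -\theta$ (cf.\ (\ref{Liouville VF})), linearity of $\mathbf{i}$ gives $\mathbf{i}_{X_H^c}\omega = \mathbf{i}_{X_H+cZ}\omega$, whence $X_H^c = X_H + cZ$; together with the previous items this identifies the set of conformal vector fields as $\{X_H + cZ : H\in\mathcal{C}^\infty(M)\}$.

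Most of this is formal manipulation with $\mathbf{d}$, $\mathbf{i}$, and $\mathbf{L}$; the one genuinely non-trivial step — and the only place where a hypothesis does more than bookkeeping — is the passage ``closed $\Rightarrow$ exact'' in the fourth item, which is precisely what $H^1(M) = 0$ provides. Without it one still obtains $X^c = X_H^c$ locally, but no global potential $H$ need exist, and the description of the conformal vector fields would acquire an extra term corresponding to a non-exact closed $1$-form. I would therefore treat that cohomological point as the crux and keep the rest as short computations.
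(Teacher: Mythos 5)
Your proof is correct and is the standard argument for this result, which the paper itself only states as a citation to McLachlan--Perlmutter without reproducing a proof: all four items follow, as you organize them, from Cartan's formula, closedness/exactness of $\omega$, and non-degeneracy of $\omega$, with $H^1(M)=0$ doing the only genuinely non-formal work in item~4 (the ``closed $\Rightarrow$ exact'' step you correctly single out). The one point worth flagging is the sign convention for the Liouville field: the proposition's defining relation, read as $\mathbf{i}_Z\omega=-\theta$ (which you adopt, and which is what makes $\mathbf{i}_{X_H+cZ}\omega=\mathbf{d}H-c\,\theta$ and hence $X_H+cZ=X_H^c$ come out right), is the internally consistent choice, even though the paper's later Euclidean example uses $\mathbf{i}_Z\omega=\theta$.
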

\begin{example}
    Euclidean setting. Consider $\big(M=\mathbb{R}^{2n}, \omega=dq \wedge dp \big)$ with $(q,p)$ local coordinates. Let $\xi,\eta \in \mathbb{R}^{2n}$, then
    \begin{equation*}
        (dq \wedge dp) (\xi,\eta) = dq(\xi) \, dp(\eta)-dp(\xi)\, dq(\eta) = 
        \xi^T \begin{pmatrix} 0 & I \\ -I & 0 \end{pmatrix} \eta = \xi^T \Omega \eta.
    \end{equation*} 
    The 2-form $\omega$ is closed since $d\omega = 0$. We need it to be exact so that $M$ is an exact manifold. We need a 1-form $\theta$ such that $\omega = -d\theta.$ Once there, we can find the Liouville vector field $Z=(Z_q, Z_p)$ defined by $\mathbf{i}_Z \omega = \theta$, i.e.
    \begin{equation*}
        \mathbf{i}_Z \omega = \omega(Z, \cdot) = Z^T \Omega \, (\cdot) = (-Z_p^T, Z_q^T) \, (\cdot) = \theta \, (\cdot).
    \end{equation*}
    We have multiple possibilities, e.g.:
    \begin{enumerate}
        \item $\theta = p \, dq$, then
        \begin{equation*}
            -Z_p^T \eta_q + Z_q^T \eta_p = p^T \eta_q
            \qquad \Rightarrow \qquad Z = \begin{pmatrix}
                0 \\ -p
            \end{pmatrix}.
        \end{equation*}
        \item $\theta = -q \, dp$, then
        \begin{equation*}
            -Z_p^T \eta_q + Z_q^T \eta_p =-q^T \eta_p
            \qquad \Rightarrow \qquad Z = \begin{pmatrix}
                -q \\ 0
            \end{pmatrix}.
        \end{equation*}
    \end{enumerate}
    One can then recover all (provided $H^1(M)=0$) the conformal vector fields on $M$ as they are given by
    \begin{equation*}
        X^c = X_H + c\, Z, \qquad H \text{ Hamiltonian, } c\in\mathbb{R}.
    \end{equation*}
\end{example}
\begin{example}
    2-sphere. Consider $\big(M=S^2, \omega_p =x_p\, dy \wedge dz + y_p\, dz \wedge dx +z_p\, dx \wedge dy \big)$ with $S^2 = \{(x,y,z)\in\mathbb{R}^3 \, :\, x^2+y^2+z^2=1\}$ and  $(x,y,z)$ local coordinates. \\
    Passing to the cylindrical coordinates with $\theta \in [0,2\pi)$, $z\in[-1,1]$ we have
    \begin{align*}
        x &= \sqrt{1-z^2} \cos \theta & \Rightarrow \qquad
        & dx = -\frac{z}{\sqrt{1-z^2}} \cos \theta \, dz - \sqrt{1-z^2} \sin\theta \, d\theta, \\
        y &= \sqrt{1-z^2} \sin \theta &\Rightarrow \qquad
        & dy = -\frac{z}{\sqrt{1-z^2}} \sin \theta \, dz + \sqrt{1-z^2} \cos\theta \, d\theta, \\[.9ex]
        z &= z & \Rightarrow \qquad & dz = dz.
    \end{align*}
    The 2-form $\omega$ becomes
    \begin{equation*}
        \omega = d\theta \wedge dz = -d(-\theta \, dz) =: -d\Theta_1, \qquad \text{or} \qquad \dots = -d(z \, d\theta) =: -d\Theta_2,
    \end{equation*}
    and the corresponding Liouville vector fields are
    \begin{align*}
            \omega(Z,\cdot) &= d\theta(Z)\,dz(\cdot) - dz(Z) \,d\theta(\cdot) = -\theta \, dz(\cdot) & \Rightarrow \qquad Z = \begin{pmatrix}
                -\theta \\ 0
            \end{pmatrix}, \\
            \omega(Z,\cdot) &= d\theta(Z)\,dz(\cdot) - dz(Z) \,d\theta(\cdot) = z \, d\theta(\cdot) &\Rightarrow \qquad Z = \begin{pmatrix}
                0 \\ -z
            \end{pmatrix}.
    \end{align*}
    The conformal vector fields on $M$ (provided $H^1(M)=0$) are then given by
    \begin{equation*}
        X^c = X_H + c\, Z, \qquad H \text{ Hamiltonian, } c\in\mathbb{R}.
    \end{equation*}
\end{example}
In the above examples we could choose local coordinates so that the Liouville vector field $Z$ is linear and depends only on one of the variables, and the 2-form $\omega$ results to be the standard canonical one. We recall Darboux' theorem.
\begin{theorem}[Darboux, \cite{marsden2013introduction} p. 148]
    Let $\big(M,\omega \big)$ be a symplectic $2n$-dimensional manifold and $p$ a point on $M$. Then, there exists a coordinate chart \\$\big(\mathcal{U}, x_1,\dots,x_n,y_1,\dots,y_n \big)$ centered at $p$ such that, on $\mathcal{U}$
    \begin{equation*}
        \omega|_\mathcal{U} = \sum_{j=1}^n dx_j \wedge dy_j,
    \end{equation*}
    i.e. $\omega$ is locally the standard canonical 2-form.
\end{theorem}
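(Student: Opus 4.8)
The plan is to prove Darboux's theorem by the path (Moser) method. First I would reduce to a pointwise normalization: since $\omega_p$ is a nondegenerate alternating bilinear form on $T_pM$, the elementary linear-algebra normal form for such forms yields a basis of $T_pM$, hence a coordinate chart $\big(\mathcal{U}_0;u_1,\dots,u_n,v_1,\dots,v_n\big)$ centered at $p$ in which $\omega$ agrees at the single point $p$ with the constant-coefficient form $\omega_0:=\sum_{j=1}^n \mathbf{d}u_j\wedge\mathbf{d}v_j$. Shrinking $\mathcal{U}_0$ to a ball, set $\omega_1:=\omega$ and interpolate: $\omega_t:=\omega_0+t(\omega_1-\omega_0)$ for $t\in[0,1]$. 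Each $\omega_t$ is closed (both endpoints are), and I would observe that they are simultaneously nondegenerate near $p$: nondegeneracy is an open condition, $\omega_t$ equals $\omega_0$ at $p$ for every $t$, and $[0,1]$ is compact, so there is a smaller ball $\mathcal{U}\subseteq\mathcal{U}_0$ on which every $\omega_t$ is symplectic.

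Next I would apply the Poincar\'e lemma on the ball $\mathcal{U}$ to write the closed $2$-form $\omega_1-\omega_0$ as $\mathbf{d}\sigma$. Here I want the extra normalization $\sigma_p=0$: using the explicit homotopy operator for the Poincar\'e lemma (integration along rays issuing from $p$), $\sigma$ is obtained by contracting $\omega_1-\omega_0$ with the radial vector field, which vanishes at $p$, so indeed $\sigma_p=0$.

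Then comes the Moser trick. I would seek a time-dependent vector field $X_t$ on $\mathcal{U}$ whose (nonautonomous) flow $\psi_t$ satisfies $\psi_t^*\omega_t=\omega_0$ with $\psi_0=\mathrm{id}$. Differentiating in $t$ and using Cartan's formula $\mathbf{L}_{X_t}\omega_t=\mathbf{d}\,\mathbf{i}_{X_t}\omega_t$ (since $\mathbf{d}\omega_t=0$) together with $\tfrac{d}{dt}\omega_t=\omega_1-\omega_0=\mathbf{d}\sigma$, one gets $\tfrac{d}{dt}\big(\psi_t^*\omega_t\big)=\psi_t^*\,\mathbf{d}\big(\mathbf{i}_{X_t}\omega_t+\sigma\big)$, so it suffices to solve the algebraic equation $\mathbf{i}_{X_t}\omega_t=-\sigma$. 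Nondegeneracy of $\omega_t$ on $\mathcal{U}$ makes this uniquely solvable for a smooth $X_t$, and $\sigma_p=0$ forces $X_t(p)=0$; hence $p$ is a stationary point, the flow $\psi_t$ exists for all $t\in[0,1]$ on some neighborhood of $p$, and $\psi_t(p)=p$. Finally $\psi_1$ is a local diffeomorphism fixing $p$ with $\psi_1^*\omega=\psi_1^*\omega_1=\omega_0=\sum_j\mathbf{d}u_j\wedge\mathbf{d}v_j$; transporting the coordinate functions through $\psi_1^{-1}$ yields a chart centered at $p$ in which $\omega=\sum_{j=1}^n\mathbf{d}x_j\wedge\mathbf{d}y_j$.

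The main obstacle is not a single computation but the coordination of the ``uniform in $t$'' ingredients: producing one neighborhood on which all $\omega_t$ are nondegenerate, arranging $\sigma_p=0$ so that the Moser vector field vanishes at $p$, and then deducing from that vanishing that the nonautonomous flow survives up to time $1$ near $p$. Once these are secured, the rest is the formal Cartan-calculus identity displayed above. (One could instead argue by induction on $n$, successively adjusting coordinates so that more and more of $\omega$ is in standard form; the Moser method is cleaner and is the one I would present.)
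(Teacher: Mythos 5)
Your proposal is correct: the reduction to a pointwise linear normal form, the interpolation $\omega_t=\omega_0+t(\omega_1-\omega_0)$ with uniform nondegeneracy near $p$, the primitive $\sigma$ with $\sigma_p=0$ from the radial homotopy operator, and the Moser equation $\mathbf{i}_{X_t}\omega_t=-\sigma$ whose flow fixes $p$ and exists up to $t=1$ are all assembled properly. The paper itself offers no proof --- it states Darboux's theorem as a cited classical result from \cite{marsden2013introduction} --- and your deformation (Moser/Lie-transform) argument is precisely the standard proof given in that reference, so there is nothing to compare beyond noting that your write-up is a faithful and complete version of it.
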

An important class of symplectic manifold is that of \textit{cotangent bundles} of smooth manifolds. Given $M$ smooth manifold, we define the \textit{Liouville 1-form} $\theta$ on its cotangent bundle $T^*M$ 
\begin{equation*}
    \theta: T^*M \to T^*(T^*M), \qquad \theta(\alpha)(X_\alpha) := \langle \alpha, T\pi (X_\alpha) \rangle,
\end{equation*}
where $\alpha\in T^*M$, $X_\alpha \in \mathfrak{X}(T^*_\alpha M)$ and $T\pi$ is the tangent lift of the projection $\pi:T^*M \to M$. In local coordinates $(\mathcal{U}, q_1,\dots,q_n,p_1,\dots,p_n)$ on $T^*M$
\begin{equation*}
    \theta|_\mathcal{U} = \sum_{i=1}^n p_i dq_i, \qquad \text{and} \qquad d\theta|_\mathcal{U} = \sum_{i=1}^n dq_i \wedge dp_i,
\end{equation*}
i.e. $d\theta$ locally is the standard symplectic form.\\
Locally, the canonical symplectic form can always be obtained by choosing Darboux coordinates.

\section{Optimization via conformal Hamiltonian systems on a manifold}
The goal is to minimize a $\mathcal{C}^2$ convex real-valued function $f$ that is defined on a manifold $M$
\begin{equation}
\label{pb}
    \min_{q\in M} f(q), \qquad f:M\to \mathbb{R}.
\end{equation}
To solve such optimization problem one can either
\begin{enumerate}
    \item work in the manifold setting using local coordinates and relying on the intrinsic properties of the manifold only, \cite{absil2009optimization}, \cite{boumal2023introduction}, or
    \item consider the submersion of $M$ in an Euclidean space and solve a constrained minimization problem.
\end{enumerate}
The idea is then to let $f$ be the potential energy of a mechanical system, whose kinetic energy is described by some function $T(\Dot{q})$.

In this work, we choose to follow the second direction. In an Euclidean space, the constraints can be enforced via Lagrange multipliers. We make the following assumptions:
\begin{itemize}
    \item $M$ is embedded in $\mathbb{R}^n$ and there we have coordinates $q_1, \dots, q_n$;
    \item $g(q) = 0$, $g:\mathbb{R}^n \to \mathbb{R}^m$ with $m<n$ are the constraints;
    \item $T(\Dot{q}) = \frac{1}{2} \Dot{q}^T M^{-1} \Dot{q}$, $f(q)$ are kinetic and potential energy respectively, $M$ being the mass matrix.
\end{itemize}
The augmented Lagrange function is \cite{3-540-30663-3}
\begin{equation}
    L(q,\Dot{q}) = T(\Dot{q}) - f(q) - g(q)^T \lambda,
\end{equation}
where $\lambda = (\lambda_1, \dots, \lambda_m)$ are the Lagrange multipliers. The Euler-Lagrange equations of the variational formulation problem for $\int_0^t L(q,\Dot{q}) dt$ are
\begin{equation}
    \frac{d}{dt} \bigg(\frac{\partial L}{\partial \Dot{q}}\bigg) - \frac{\partial L}{\partial q} = 0.
\end{equation}
They can be rewritten as a system of ODE, together with the constraints
\begin{equation}
    \begin{cases}
        \Dot{q} = v, \\
        \Dot{v} = \dots, \\
        g(q) = 0.
    \end{cases}
\end{equation}

\subsection{Formulation of the conformal Hamiltonian system}
As the system is of mechanical type, we consider the Hamiltonian formulation rather than the Lagrangian one. In what follows we sketch the derivation of it and we refer to \cite{3-540-30663-3} for further details.

\subsubsection{Conservative system}
We use the momentum coordinates
\begin{equation*}
    p = \frac{\partial L}{\partial \Dot{q}}
\end{equation*}
in place of $v=\Dot{q}$ and write the Hamiltonian as the total energy of the system:
\begin{equation}
    H(q,p) = T(p) + f(q).
\end{equation}
The system becomes
\begin{equation}
\label{constrained Hamiltonian system}
    \begin{split}
        \Dot{q} &= H_p(q,p), \\
        \Dot{p} &= - H_q(q,p) - G(q)^T \lambda, \\
        0 &= g(q),
    \end{split}
\end{equation}
where $G(q)$ is the Jacobian of $g$. Differentiating the constraint once we get
\begin{equation}
    0 = G(q) H_p(q,p),
\end{equation}
and twice
\begin{equation}
    0 = \frac{\partial}{\partial q}  \Big(G(q)H_p(q,p) \Big) H_p(q,p) - G(q) H_{pp}(q,p) \Big(H_q(q,p)+G(q)^T\lambda \Big).
\end{equation}
Provided that 
\begin{equation}
    \label{lambda}
    G(q)H_{pp}(q,p)G(q)^T \qquad \text{is invertible,}
\end{equation}
one can recover an expression for $\lambda$ in terms of the variables $(q,p)$.
Inserting it into (\ref{constrained Hamiltonian system}) gives a differential equation
for $(q,p)$ on the manifold
\begin{equation}
\label{manifold}
    \mathcal{M} = \{(q,p) \, : \, g(q) = 0, \, G(q)H_p(q,p) = 0 \},
\end{equation}
i.e. the cotangent bundle $T^*M$ of the configuration manifold $M$. \\
The system (\ref{constrained Hamiltonian system}) satisfies the following properties:
\begin{enumerate}
    \item \textbf{Preservation of the Hamiltonian.} Differentiating the Hamiltonian $H$ along the solution $(q(t),p(t))$ of the system (\ref{constrained Hamiltonian system}) we get
    \begin{equation*}
        -H_p^T H_q - H_p^T G^T \lambda + H_q^T H_p = - H_p^T G^T \lambda = 0,
    \end{equation*}
    as we are on $\mathcal{M}$.
    \item \textbf{Symplecticity of the flow.} The flow $\varphi$ of the system (\ref{constrained Hamiltonian system}) is a transformation on $\mathcal{M}$, $\varphi:\mathcal{M}\to\mathcal{M}$, so its derivative is a mapping between the corresponding tangent spaces:
    \begin{equation*}
        \frac{d}{dt}\varphi : T\mathcal{M} \to T\mathcal{M}.
    \end{equation*}
    The map $\varphi$ is symplectic if, for every $x=(q,p)\in\mathcal{M}$,
    \begin{equation}
    \label{symplecticity condition}
        \xi_1^T \varphi'(x)^T J \varphi'(x)\xi_2 = \xi_1^T J \xi_2, \qquad \mbox{for all } \xi_1, \xi_2 \in T_x\mathcal{M}.
    \end{equation}
    In the expression above, $\varphi'(x)$ is the directional derivative:
    \begin{equation*}
        \varphi'(x)\xi := \frac{d}{d\tau}\bigg|_{\tau=0} \varphi(\gamma(\tau))
    \end{equation*}
    for $\xi\in T_x\mathcal{M}$ and $\gamma$ is a path on $M$ such that
    \begin{equation*}
        \gamma(0) = x, \qquad \Dot{\gamma}(0) = \xi.
    \end{equation*}
    In the case $\varphi$ is defined and continuously differentiable in an open set $U\subset \mathbb{R}^{2n}$ which contains $\mathcal{M}$, then $\varphi'$ is the usual Jacobian.
\end{enumerate}
\begin{theorem}[Theorem 1.2 in \cite{3-540-30663-3}, section VII]
    Let $H(q,p)$ and $g(q)$ be twice continuously differentiable. The flow $\varphi_t : \mathcal{M} \to \mathcal{M}$ of the system (\ref{constrained Hamiltonian system}) is then a symplectic transformation on $\mathcal{M}$.
\end{theorem}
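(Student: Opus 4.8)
\emph{Proof proposal.}

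The plan is to pass from the differential--algebraic system to an ordinary one on a neighbourhood of $\mathcal{M}$, and then imitate the classical proof that a Hamiltonian flow preserves the canonical $2$-form, while keeping careful track of the Lagrange-multiplier term. By the invertibility assumption~(\ref{lambda}) one can solve the twice-differentiated constraint for $\lambda=\lambda(q,p)$ as a $\mathcal{C}^1$ function on an open set $U\subset\mathbb{R}^{2n}$ containing $\mathcal{M}$; inserting it into~(\ref{constrained Hamiltonian system}) yields a genuine ODE $\dot z=F(z)$, with $z=(q,p)$, whose flow extends $\varphi_t$ and for which, as noted just above the statement, $\varphi_t'(z)$ is the ordinary Jacobian. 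Differentiating $G(q(t))H_p(q(t),p(t))$ along solutions in $U$ and using that $\lambda$ was chosen precisely so that this derivative vanishes identically, one obtains that $G(q)H_p$ is constant along each solution, hence $g(q(t))=g(q_0)+t\,G(q_0)H_p(q_0,p_0)$. Thus on $\mathcal{M}$, where $g$ and $GH_p$ both vanish, the trajectory stays on $\mathcal{M}$, i.e.\ $\mathcal{M}$ is invariant under the flow; consequently $\varphi_t'(z)$ maps $T_z\mathcal{M}$ into $T_{\varphi_t(z)}\mathcal{M}$.

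Next I would fix $z\in\mathcal{M}$ and $\xi_1,\xi_2\in T_z\mathcal{M}$ and set $\xi_i(t):=\varphi_t'(z)\xi_i$. These solve the variational equation $\dot\xi=F'(z(t))\,\xi$ and, by the previous paragraph, remain in $T_{z(t)}\mathcal{M}$; in particular each of them satisfies $G(q(t))\,\xi_{q,i}(t)=0$, the linearisation of $g(q)=0$ along the trajectory. The symplecticity condition~(\ref{symplecticity condition}) is exactly the statement that $t\mapsto\xi_1(t)^T J\,\xi_2(t)$ is constant, so it suffices to differentiate this scalar, getting $\frac{d}{dt}(\xi_1^TJ\xi_2)=\xi_1^T(F'(z)^TJ+JF'(z))\xi_2$. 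Writing $F'(z)=J\,\nabla^2H(z)-N(z)$, where $N(z)$ has vanishing upper half-rows and lower half-rows equal to $B(z):=\partial_z(G(q)^T\lambda(q,p))$, the Hessian part contributes $\nabla^2H-\nabla^2H=0$ exactly as in the unconstrained case, so only the $N$-term survives: $\frac{d}{dt}(\xi_1^TJ\xi_2)=\xi_1^TK(z)\xi_2$, where $K(z)$ is the block matrix with antisymmetric $q$-$q$ block $B_q^T-B_q$, off-diagonal blocks $-B_p$ and $B_p^T$, zero $p$-$p$ block, and $B=[\,B_q\mid B_p\,]$.

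The decisive observation is the structure of $B$. Since $G(q)^T\lambda=\sum_k\lambda_k\nabla g_k(q)$, the $q$-block is $B_q=G(q)^T\,\partial_q\lambda+\sum_k\lambda_k\nabla^2 g_k(q)$ --- a term of the form $G(q)^T(\cdot)$ plus a \emph{symmetric} term --- and the $p$-block is $B_p=G(q)^T\,\partial_p\lambda$. Hence in $B_q^T-B_q$ the symmetric Hessians of $g$ cancel, and the remainder, together with $B_p$ and $B_p^T$, carries a factor $G(q)$ or $G(q)^T$ in every entry. Because $G(q(t))\,\xi_{q,1}(t)=G(q(t))\,\xi_{q,2}(t)=0$ for all $t$, the quadratic form $\xi_1^TK(z(t))\xi_2$ vanishes identically; therefore $\xi_1(t)^TJ\,\xi_2(t)\equiv\xi_1^TJ\,\xi_2$, which is precisely~(\ref{symplecticity condition}), so $\varphi_t$ is a symplectic transformation on $\mathcal{M}$.

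The step I expect to cost the most effort is this last computation: expanding $F'$ and $F'^TJ+JF'$ in $2\times 2$ block form, isolating the symmetric $\sum_k\lambda_k\nabla^2 g_k$ contribution, and checking that everything that survives is annihilated by $G\xi_q=0$. Some care is also needed in the first paragraph to legitimately replace the flow of the differential--algebraic system by that of the ambient ODE on $U$, so that $\varphi'$ is the ordinary Jacobian; an alternative that sidesteps this is to argue intrinsically that the restriction of $\mathbf{d}\theta$ to $\mathcal{M}\cong T^*M$ is again a symplectic form and that the induced vector field is the Hamiltonian vector field of $H|_{\mathcal{M}}$, after which the statement follows from the classical invariance theorem --- but proving that the restricted $2$-form is symplectic is itself comparable work.
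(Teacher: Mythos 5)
The paper does not prove this statement itself---it is imported verbatim from Hairer--Lubich--Wanner (Theorem VII.1.2)---and your argument is essentially the standard proof from that source: extend to an ODE on a neighbourhood by substituting $\lambda(q,p)$, verify invariance of $\mathcal{M}$, and kill the non-Hamiltonian part of $F'^TJ+JF'$ by noting that the symmetric $\sum_k\lambda_k\nabla^2g_k$ contributions cancel in $B_q^T-B_q$ while every surviving block carries a factor $G(q)$ or $G(q)^T$ annihilated by $G(q)\xi_{q,i}=0$; this is also exactly the continuous-time analogue of the block computation the paper itself performs for conformal symplecticity of the discrete scheme. The proof is correct as written, the only caveat (inherited from the theorem statement itself) being that $\mathcal{C}^2$ regularity of $H$ and $g$ makes $\lambda$, and hence $F'$, only continuous, so the variational-equation step implicitly assumes slightly more smoothness.
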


\subsubsection{Dissipative system}
The system (\ref{constrained Hamiltonian system}) is conservative: its evolution occurs on level sets of $H$ but does not lead to a point where the energy is minimized \cite{3-540-30663-3}. In order to make that happen, we need to add a dissipation: by doing so, the evolution leads to a minimizer $(q^*, p^*)$ of $H$ \cite{maddison2018hamiltonian}. Moreover, if the Hamiltonian is separable, i.e.
\begin{equation*}
    H(q^*, p^*) = f(q^*) + T(p^*),
\end{equation*}
then $q^*$ is a minimizer of $f$, $p^*$ is a minimizer of $T$ and the original problem (\ref{pb}) is solved \cite{mclachlan2001conformal}. A dissipative version of (\ref{constrained Hamiltonian system}) is a conformal Hamiltonian system of the form (see section \ref{Conformal Hamiltonian systems on a manifold}) 
\begin{equation}
\label{dissipative constrained Hamiltonian system}
    \begin{split}
        \Dot{q} &= H_p(q,p), \\
        \Dot{p} &= - H_q(q,p) - G(q)^T \lambda - \gamma p, \\
        0 &= g(q),
    \end{split}
\end{equation}
where $\gamma >0$ is the parameter of dissipation.

\subsection{Numerical method}
After building the continuous model, we need to rely on some numerical scheme to solve it. A possibility is to split it into two parts, a conservative one and a dissipative one, compute the flow of each of them separately and combine them via splitting schemes.

\subsubsection{Conservative system}
The conservative system coincides with (\ref{constrained Hamiltonian system}). We call $\Phi_t^C:\mathcal{M}\to \mathcal{M}$ its flow and to find an approximation $\Psi_h^C$ of it we use the RATTLE numerical method \cite{andersen1983rattle}. Assuming $h$ is the time step and $(q_n,p_n)$ the approximation of the flow at $t_n$, the approximation at $t_{n+1}$ is found by solving the system:
\begin{equation}
    \label{RATTLE general}
    \begin{split}
        p_{n+1/2} &= p_n - \frac{h}{2} \big(H_q(p_{n+1}, q_n) + G(q_n)^T \lambda_n \big), \\
        q_{n+1} &= q_n + \frac{h}{2} \big(H_p(p_{n+1/2}, q_n) + H_p(p_{n+1/2}, q_{n+1} \big), \\[1ex]
        0 &= g(q_{n+1}), \\
        p_{n+1} &= p_{n+1/2} - \frac{h}{2} \big(H_q(p_{n+1}, q_{n+1}) + G(q_{n+1})^T \mu_n \big), \\[1ex]
        0 &= G(q_{n+1}) H_p (p_{n+1}, q_{n+1}).
    \end{split}
\end{equation}
This scheme ensures that $\Psi_h^C(q_n, p_n) = (q_{n+1},p_{n+1})$ lies on the manifold $\mathcal{M}$, i.e. $\Psi_h^C:\mathcal{M}\to \mathcal{M}$. In particular, the parameter $\lambda_n$ and the third equation ensure that the constraints are satisfied, so that $q_{n+1}$ is on $M$. The parameter $\mu_n$ and the last equation, instead, ensure that $p_{n+1}$ is on $TM$ (indeed $p_{n+1}$ is the projection of $p_{n+1/2}$ on $TM$). Existence and (local) uniqueness of the solution are discussed in \cite{3-540-30663-3}, as well as the proof of the following statement:
\begin{theorem}(Theorem 1.3 in \cite{3-540-30663-3}, section VII)
    The numerical scheme (\ref{RATTLE general}) is symmetric, symplectic and convergent of order two.
\end{theorem}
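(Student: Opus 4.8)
The plan is to establish the three properties in turn, leaning on the block structure of the implicit system (\ref{RATTLE general}) and on the standing assumption that the incoming data $(q_n,p_n)$ already lie on $\mathcal{M}$, so that $g(q_n)=0$ and $G(q_n)H_p(p_n,q_n)=0$. Throughout I regard the multipliers $\lambda_n$ and $\mu_n$ as the smooth functions of $(q_n,p_n)$ and $h$ that are pinned down by the hidden-constraint equations in (\ref{RATTLE general}); by the implicit function theorem together with the invertibility hypothesis (\ref{lambda}), this makes $\Psi_h^C$ a well-defined smooth map $\mathcal{M}\to\mathcal{M}$ for $|h|$ small.

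\emph{Symmetry.} First I would show $\Psi_{-h}^C\circ\Psi_h^C=\mathrm{id}$. Interchange $(q_n,p_n)\leftrightarrow(q_{n+1},p_{n+1})$ and send $h\mapsto-h$ in (\ref{RATTLE general}), keeping the internal stage $p_{n+1/2}$ fixed and swapping $\lambda_n\leftrightarrow\mu_n$. Line by line the first momentum half-kick becomes the fourth and conversely, the midpoint position update is already palindromic, and the two hidden-constraint equations $g(q_{n+1})=0$ and $G(q_{n+1})H_p(p_{n+1},q_{n+1})=0$ are reproduced verbatim (here one uses that the reversed starting point lies on $\mathcal{M}$). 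Thus the reversed step satisfies the very same system, and local uniqueness of the RATTLE solution (established in \cite{3-540-30663-3}) gives $\Psi_{-h}^C=(\Psi_h^C)^{-1}$.

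\emph{Symplecticity.} This is the step I expect to be the crux, because the symplecticity condition (\ref{symplecticity condition}) must be checked on the tangent spaces $T_x\mathcal{M}$ while the multiplier terms $G(q)^{T}\lambda$, $G(q)^{T}\mu$ contribute a priori nonzero increments. My plan is to differentiate the relations in (\ref{RATTLE general}) as identities on $\mathcal{M}$, introduce the intermediate pairs $(q_n,p_{n+1/2})$ and $(q_{n+1},p_{n+1/2})$, and check that each of the three substeps -- first momentum half-kick, constrained position move, final momentum half-kick with projection onto $TM$ -- preserves the canonical two-form $\mathrm{d}q\wedge\mathrm{d}p$ restricted to the relevant (co)tangent space; the terms involving $\mathrm{d}\lambda$ and $\mathrm{d}\mu$ drop out because on $T\mathcal{M}$ the position variations satisfy $G(q)\,\mathrm{d}q=0$. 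Equivalently, and more transparently, I would exhibit RATTLE as the variational integrator attached to a discrete constrained Lagrangian $L_h(q_n,q_{n+1})$ with the endpoint constraints $g(q_n)=g(q_{n+1})=0$, identify the momenta via $p_n=-\partial_{q_n}L_h$ and $p_{n+1}=\partial_{q_{n+1}}L_h$ (which matches the half-kick formulas), and invoke the general fact that constrained discrete Euler--Lagrange flows are symplectic on the constraint submanifold.

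\emph{Order two.} Finally I would expand $q_{n+1}$, $p_{n+1}$, $p_{n+1/2}$, $\lambda_n$, $\mu_n$ in powers of $h$ about the exact solution of (\ref{constrained Hamiltonian system}) and, using the once- and twice-differentiated constraints to control the multipliers, show that the local error is $O(h^3)$; in fact it is enough to verify consistency of order one, since a symmetric method automatically has even order, hence order at least two. Composing the $O(h^3)$ local error with the Lipschitz stability of the one-step map on a neighbourhood of the solution in $\mathcal{M}$ then yields convergence of order two. The recurring technical difficulty in both this part and the symplecticity argument is the bookkeeping of the projection/multiplier terms so that staying on $\mathcal{M}$ neither breaks the two-form nor degrades the order; this is exactly where hypothesis (\ref{lambda}) enters.
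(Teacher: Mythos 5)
Your outline is correct and follows essentially the same route the paper relies on: the paper itself only cites this result from \cite{3-540-30663-3}, but its own proofs of Theorems \ref{properties SM 1} and \ref{properties SM 2} (explicitly described as modifications of the RATTLE arguments) use exactly your ingredients --- implicit function theorem with hypothesis (\ref{lambda}) for well-posedness, the $h\leftrightarrow -h$, $\lambda_n\leftrightarrow\mu_n$ reversal for symmetry, differentiation of the substeps with the multiplier terms killed by $G(q)\,\mathrm{d}q=0$ on $T\mathcal{M}$ for symplecticity, and first-order consistency plus ``symmetric implies even order'' for order two. Your alternative symplecticity argument via a discrete constrained variational principle is a valid, genuinely different justification, but the direct computation you list first is the one mirrored in the paper.
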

As long as the kinetic energy $T$ depends only on $p$, which is always the case for us, the Hamiltonian is separable and (\ref{RATTLE general}) simplifies to
\begin{equation}
    \label{RATTLE separable}
    \begin{split}
        p_{n+1/2} &= p_n - \frac{h}{2} \big(H_q(q_n) + G(q_n)^T \lambda_n \big), \\[1ex]
        q_{n+1} &= q_n + h H_p(p_{n+1/2}),\\[1.3ex]
        0 &= g(q_{n+1}), \\
        p_{n+1} &= p_{n+1/2} - \frac{h}{2} \big(H_q(q_{n+1}) + G(q_{n+1})^T \mu_n \big), \\[1ex]
        0 &= G(q_{n+1}) H_p (p_{n+1}).
    \end{split}
\end{equation}
Unfortunately, the system remains nonlinear and implicit as $g$ is nonlinear and $\lambda_n$ is unknown (the expression (\ref{lambda}) would require $(q_n, p_{n+1/2})$ to compute it). 

\subsubsection{Dissipative system}
The dissipative part of the system (\ref{constrained Hamiltonian system}) is
\begin{equation}
\label{dissipative part}
    \begin{split}
        \Dot{q} &= 0, \\
        \Dot{p} &= - \gamma p,
    \end{split}
\end{equation}
and can be solved exactly. Given $(q_0, p_0)\in\mathcal{M}$ initial condition, the flow $\Phi_t^D:\mathcal{M}\to \mathcal{M}$ at time $t$ is given by
\begin{equation*}
    \Phi_t^D (q_0, p_0) = \big(q_0, e^{-\gamma t}p_0\big).
\end{equation*}

\subsubsection{Splitting scheme}
The flow of the whole system (\ref{constrained Hamiltonian system}) is found by composing the conservative and dissipative ones, $\Phi_t^C$ and $\Phi_t^D$. We test
\begin{enumerate}
    \item The Lie-Trotter first order integration scheme
    \begin{equation*}
        \label{Lie-Trotter}
        \Psi_h^1 = \Psi_h^C \circ \Psi_h^D,
    \end{equation*}
    leading to the numerical method
    \begin{equation}
    \label{SM order 1}
        \begin{split}
        p_{n+1/2} &= e^{-\gamma h} p_n - \frac{h}{2} \big(H_q(q_n) + G(q_n)^T \lambda_n \big), \\[1ex]
        q_{n+1} &= q_n + h H_p(p_{n+1/2}), \\[1.3ex]
        0 &= g(q_{n+1}), \\
        p_{n+1} &= p_{n+1/2} - \frac{h}{2} \big(H_q(q_{n+1}) + G(q_{n+1})^T \mu_n \big), \\[1ex]
        0 &= G(q_{n+1}) H_p (p_{n+1}).
        \end{split}
    \end{equation}
    \item The leapfrog symmetric second order integration scheme
    \begin{equation*}
        \label{leapfrog}
        \Psi_h^2 = \Psi_{h/2}^C \circ \Psi_h^D \circ \Psi_{h/2}^C,
    \end{equation*}
    leading to the numerical method
    \begin{equation}
    \label{SM order 2}
        \begin{split}
        p_{n+1/2} &= e^{-\gamma h/2} p_n - \frac{h}{2} \big(H_q(q_n) + G(q_n)^T \lambda_n \big), \\[1ex]
        q_{n+1} &= q_n + h H_p(p_{n+1/2}), \\[1.3ex]
        0 &= g(q_{n+1}), \\
        p_{n+1} &= e^{-\gamma h/2} \Big( p_{n+1/2} - \frac{h}{2} \big(H_q(q_{n+1}) + G(q_{n+1})^T \mu_n \big) \Big), \\[1ex]
        0 &= G(q_{n+1}) H_p (p_{n+1}).
        \end{split}
    \end{equation}
\end{enumerate}
We now see some properties of the numerical schemes (\ref{SM order 1}) and (\ref{SM order 2}) whose proofs are modifications of the corresponding ones for the RATTLE method (\ref{RATTLE general}) in \cite{3-540-30663-3}, section VII.

\begin{theorem}
\label{properties SM 1}
    The numerical method (\ref{SM order 1}) has a unique solution, is conformal symplectic with parameter $e^{-\gamma h}$ and convergent of order 1.
\end{theorem}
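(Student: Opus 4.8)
The plan is to prove the three assertions separately, in each case adapting to the presence of the exact dissipative map $\Phi_h^D(q,p)=(q,e^{-\gamma h}p)$ the corresponding argument given for the plain RATTLE scheme (\ref{RATTLE general})--(\ref{RATTLE separable}) in \cite{3-540-30663-3}, section~VII. Write $\Psi_h^1=\Psi_h^C\circ\Psi_h^D$ with $\Psi_h^D=\Phi_h^D$. One first observes that, for the separable kinetic energy $T(p)=\tfrac12 p^{T}M^{-1}p$ used here, the constraint $G(q)M^{-1}p=0$ defining $\mathcal{M}$ in (\ref{manifold}) is homogeneous of degree one in $p$, hence preserved by $p\mapsto e^{-\gamma h}p$; so $\Phi_h^D$ really is a diffeomorphism of $\mathcal{M}$, and $\Psi_h^1$ is a self-map of $\mathcal{M}$.

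For existence and uniqueness I would run the implicit function theorem exactly as for RATTLE. Since $\Psi_h^D$ is an explicit diffeomorphism, $\Psi_h^1$ is well defined and single-valued if and only if $\Psi_h^C$ is, so it suffices to solve (\ref{RATTLE separable}) with $p_n$ replaced by $e^{-\gamma h}p_n$. First I would regard $(p_{n+1/2},\lambda_n)$ as determined by the first three equations, after substituting $q_{n+1}=q_n+hH_p(p_{n+1/2})$ into $g(q_{n+1})=0$, and then $(p_{n+1},\mu_n)$ as determined by the last two. At $h=0$ one has $e^{-\gamma h}=1$, so the linearisations of these two subsystems coincide with those arising for RATTLE, and their invertibility is precisely the nondegeneracy hypothesis (\ref{lambda}) (for the separable case: $G(q)M^{-1}G(q)^{T}$ invertible). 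Hence for $h$ small enough there is a locally unique solution, depending smoothly on $(q_n,p_n)$ and on $h$.

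For conformal symplecticity, let $\omega$ be the canonical symplectic form of $T^{*}M$ restricted to $\mathcal{M}$. By the symplecticity of RATTLE recalled above (Theorem~1.3 in \cite{3-540-30663-3}), $(\Psi_h^C)^{*}\omega=\omega$. In Darboux coordinates $\omega=\sum_i dq_i\wedge dp_i$, and since $\Phi_h^D$ fixes $q$ and multiplies $p$ by the constant $e^{-\gamma h}$, it pulls $dq_i$ back to $dq_i$ and $dp_i$ back to $e^{-\gamma h}dp_i$; thus $(\Phi_h^D)^{*}\omega=e^{-\gamma h}\omega$, i.e. $\Phi_h^D$ is conformal with parameter $e^{-\gamma h}$ in the sense of (\ref{conformal diffeo}). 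Using $(\Psi_h^C\circ\Phi_h^D)^{*}=(\Phi_h^D)^{*}\circ(\Psi_h^C)^{*}$ we get
\[
(\Psi_h^1)^{*}\omega=(\Phi_h^D)^{*}\big((\Psi_h^C)^{*}\omega\big)=(\Phi_h^D)^{*}\omega=e^{-\gamma h}\,\omega ,
\]
which is the claim; the same bookkeeping of pullbacks applied to the symmetric composition gives the analogous statement for (\ref{SM order 2}) with the identical parameter.

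Finally, for order one I would use a standard local-error plus stability argument. The exact flow being discretised is that of the conformal system (\ref{dissipative constrained Hamiltonian system}), whose vector field on $\mathcal{M}$ is the sum of the conservative field of (\ref{constrained Hamiltonian system}) and the dissipative field $\dot q=0$, $\dot p=-\gamma p$ of (\ref{dissipative part}) (on $\mathcal{M}$ the multiplier is unchanged, by the homogeneity remark). Since $\Psi_h^C$ approximates the conservative flow to order two, $\Psi_h^C-\Phi_h^C=O(h^{3})$, while the Lie--Trotter composition obeys the classical splitting estimate $\Phi_h^C\circ\Phi_h^D-\Phi_h^{C+D}=O(h^{2})$ on the compact pieces of $\mathcal{M}$ we work on, the vector fields being as smooth as the standing assumptions on $H$ and $g$ allow; adding the two contributions gives local error $\Psi_h^1-\Phi_h^{C+D}=O(h^{2})$, and a Lady Windermere's fan / discrete Gronwall argument on a bounded time interval, with all iterates staying on $\mathcal{M}$ by construction, upgrades this to the global estimate $O(h)$. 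The step that will require the most care is the existence-and-uniqueness part, where the RATTLE implicit-function-theorem argument must be reproduced faithfully — solving the hidden constraint $G(q)H_p(p)=0$ and the two multiplier equations consecutively and checking that the extra factors $e^{-\gamma h}$, equal to $1$ at $h=0$, keep the relevant Jacobians under control; the symplecticity and order statements are then short, the only nonelementary ingredient being the classical splitting estimate.
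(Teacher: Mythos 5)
Your proof is correct, and for two of the three claims it takes a genuinely different route from the one in the paper; the comparison is worth recording. For existence and uniqueness you follow essentially the same implicit-function-theorem strategy as the paper, and the one point you rightly flag as delicate is exactly where the paper spends its effort: the constraint $g\bigl(q_n+hH_p(p_{n+1/2})\bigr)=0$ must first be divided by $h$, i.e.\ rewritten as $\int_0^1 G\bigl(q_n+\tau hH_p(p)\bigr)H_p(p)\,d\tau=0$, before the Jacobian at $h=0$ becomes the nonsingular block matrix with blocks $I$, $G^T$, $GH_{pp}$, $0$; only then does your remark that ``the linearisations coincide with those for RATTLE'' hold, and only then is invertibility equivalent to (\ref{lambda}). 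For conformal symplecticity the paper does not invoke the symplecticity of RATTLE as a black box: it recomputes the Jacobians of the two sub-maps $\varphi_a:(q_n,p_n)\mapsto(q_{n+1},p_{n+1/2})$ (into which the factor $e^{-\gamma h}$ has been absorbed) and $\varphi_b:(q_{n+1},p_{n+1/2})\mapsto(q_{n+1},p_{n+1})$, verifying $\xi_1^T{\varphi_a'}^TJ\varphi_a'\xi_2=e^{-\gamma h}\xi_1^TJ\xi_2$ and the symplecticity of $\varphi_b$ directly on constrained tangent vectors. Your pullback computation $(\Psi_h^C\circ\Phi_h^D)^*\omega=(\Phi_h^D)^*(\Psi_h^C)^*\omega=e^{-\gamma h}\omega$ is shorter and more modular, and it makes transparent why the order-two composition carries the same parameter; it does require the observation, which you correctly supply and the paper leaves implicit, that $\Phi_h^D$ preserves $\mathcal{M}$ because the hidden constraint $GM^{-1}p=0$ is linear in $p$. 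For the convergence order, the paper performs a direct Taylor comparison whose key technical point is that the combined multiplier defect $\nu=\tfrac h2(\lambda_n-\mu_n)$ is $O(h^2)$, obtained by feeding the expansion into the hidden constraint and using the invertibility of $GH_{pp}G^T$; your decomposition into RATTLE's $O(h^3)$ local error plus the classical $O(h^2)$ Lie--Trotter defect, followed by a discrete Gronwall summation, reaches the same conclusion with less computation, provided one also checks (as you do) that on $\mathcal{M}$ the dissipative field does not alter the multiplier, so that the two split fields really sum to the vector field of (\ref{dissipative constrained Hamiltonian system}).
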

\begin{proof}
    \textbf{Existence and uniqueness of the numerical solution.} First we prove existence and uniqueness of $(q_{n+1}, p_{n+1/2})$ using the first three lines of (\ref{SM order 1}). By inserting the expression of $q_{n+1}$ in $0=g(q_{n+1})$ we obtain a nonlinear system for $p_{n+1/2}$ and $h\lambda_n$. The implicit function theorem cannot be directly applied to prove existence and uniqueness of the solution because of the factor $h$ in front of $H(q_n, p_{n+1/2})$. We therefore rewrite the second of those equations as follows
    \begin{equation*}
        \begin{split}
        0 = g(q_{n+1}) & = g(q_n) + \int_0^1 G\big(q_n + \tau(q_{n+1}-q_n) \big) \, (q_{n+1}-q_n) \, d\tau \\
        & = \int_0^1 G\big(q_n + \tau h H_p(p_{n+1/2}) \big) \,  H_p(p_{n+1/2}) \, d\tau,
        \end{split}
    \end{equation*}
    where we have used $g(q_n)=0$, replaced the expression of $q_{n+1}$ and divided by $h$. The system becomes then $F(p_{n+1/2}, h\lambda_n, h) = 0$ with $F$ defined as
    \begin{equation*}
        F(p,\nu,h) = \begin{pmatrix}
            p - e^{\gamma h}p_n +\dfrac{1}{2} h H_q(q_n) + \displaystyle\frac{1}{2} G(q_n)^T \nu \\[1.5ex]
            \displaystyle \int_0^1 G\big(q_n + \tau h H_p(p) \big) \,  H_p(p) \, d\tau
        \end{pmatrix}.
    \end{equation*}
    We then have that
    \begin{equation*}
        F(p_n,0,0) = \begin{pmatrix}
            p_n - p_n + 0 \\[1ex]
            \displaystyle \int_0^1 G(q_n) \,  H_p(p_n) \, d\tau
        \end{pmatrix} = 0,
    \end{equation*}
    as $(q_n, p_n) \in \mathcal{M} = \{(q,p) \, : \, g(q) = 0, \, G(q)H_p(q,p) = 0 \}$ from (\ref{manifold}). Moreover, the Jacobian matrix
    \begin{equation*}
        \frac{\partial F}{\partial (p,\nu)} (p_n,0,0) = \begin{pmatrix}
            I & G(q_n)^T \\[.8ex]
            G(q_n)H_{pp}(p_n) & 0
        \end{pmatrix}
    \end{equation*}
    is invertible since $G(q(t))H_{pp}(p(t))G(q(t))^T$ is invertible by (\ref{lambda}). We can now apply the implicit function theorem which ensures existence and local uniqueness of the solution $(p_{n+1/2}, h\lambda_n)$ (and so also of $q_{n+1}$), provided $h$ is sufficiently small.\\
    The second step is to prove existence and uniqueness of the projection $p_{n+1}$ using the last two lines of (\ref{SM order 1}). Those equations form a nonlinear system for $(p_{n+1}, h\mu_n)$ and if we define
    \begin{equation*}
        \Tilde{F}(p,\nu,h) = \begin{pmatrix}
            p - p_{n+1/2} + \displaystyle \frac{h}{2} H_q(q_{n+1}) + G(q_{n+1})^T \nu \\[1.7ex]
            G(q_{n+1}) H_p (p)
        \end{pmatrix},
    \end{equation*}
    then $\Tilde{F}(p_{n+1/2}, 0, 0) = 0$ (since for $h=0$, $q_{n+1} = q_n$ and $p_{n+1/2} = p_n$). The Jacobian of $\tilde{F}$ with respect to $(p,\nu)$ evaluated in such point is the same as the one for $F$: as for the first step, existence and uniqueness of $(p_{n+1}, h\mu_n)$ follows by applying the implicit function theorem.\\
    \textbf{Conformal symplecticity.} To show (\ref{SM order 1}) is conformal symplectic with parameter $e^{-\gamma h}$ we need to verify a modification of (\ref{symplecticity condition}). In particular, we have to see that for every $x=(q,p)\in\mathcal{M}$,
    \begin{equation*}
        \xi_1^T \varphi'(x)^T J \varphi'(x)\xi_2 = e^{-\gamma h} \xi_1^T J \xi_2, \qquad \mbox{for all } \xi_1, \xi_2 \in T_x\mathcal{M}.
    \end{equation*}
    We verify this in two steps: first we study the mapping $\varphi_a : (q_n,p_n)\mapsto (q_{n+1},p_{n+1/2})$ defined by the first equations of (\ref{SM order 1}). We consider $\lambda_n$ as a function $\lambda(q_n,p_n)$ and differentiate $\varphi_a$ with respect to $(q_n,p_n)$:
    \begin{equation*}
        \varphi'_a = \begin{pmatrix}
            I & 0 \\[1ex]
            -\dfrac{h}{2} \big(S + G^T\lambda_q \big) & e^{-\gamma h} - \dfrac{h}{2} G^T\lambda_p 
        \end{pmatrix}
    \end{equation*}
    where $S:= H_{qq} + G_q^T\lambda$ is a symmetric matrix as both $H_{qq}$ and $G_q$ are Hessians. We then compute the product
    \begin{equation*}
        {\varphi'}_{a}^{T} J \varphi'_a = \begin{pmatrix}
            -\dfrac{h}{2} \big(G^T \lambda_q - \lambda_q G \big) & e^{-\gamma h} - \dfrac{h}{2} G^T \lambda_p \\[2ex]
            -e^{-\gamma h} + \dfrac{h}{2} \lambda_p G^T & 0
        \end{pmatrix},
    \end{equation*}
    and multiply this matrix from the left by $\xi_1$ and from the right by $\xi_2$, with $\xi_1,\xi_2 \in T_{q_n,p_n}\mathcal{M}$. By partitioning $\xi = (\xi_q,\xi_p)$ and using that $G(q_n)\xi_{q,i} = 0$ for $i=1,2$, we have
    \begin{equation*}
        \begin{split}
            \xi_1^T {\varphi'}_{a}^{T} J \varphi'_a \xi_2 = & -\frac{h}{2} \xi_{q,1}^T G^T \lambda_q \xi_{q,2} + \frac{h}{2} \xi_{q,1}^T \lambda_q^T G \xi_{q,2} + e^{-\gamma h} \xi_{q,1}^T \xi_{p,2} \\
            & - \frac{h}{2} \xi_{q,1}^T G^T \lambda_p \xi_{p,2} - e^{-\gamma h} \xi_{p,1}^T \xi_{q,2} +\frac{h}{2} \xi_{p,1}^T \lambda_p^T G\xi_{q,2} \\[1ex]
            = & \,\,\, e^{-\gamma h} \xi_{q,1}^T \xi_{p,2} - e^{-\gamma h} \xi_{p,1}^T \xi_{q,2} \\[1.5ex]
            = & \,\,\, e^{-\gamma h} \xi_1^T J \xi_2,
        \end{split}
    \end{equation*}
    meaning that the mapping $\varphi_a$ is conformal symplectic with parameter $e^{-\gamma h}$. Then we study the projection step $\varphi_b : (q_{n+1},p_{n+1/2})\mapsto (q_{n+1},p_{n+1})$. We consider $\mu_n$ as a function $\lambda(q_{n+1},p_{n+1/2})$ and differentiate $\varphi_a$ with respect to $(q_{n+1},p_{n+1/2})$:
    \begin{equation*}
        \varphi'_a = \begin{pmatrix}
            I & 0 \\[1ex]
            -\dfrac{h}{2} \big(\Tilde{S} + G^T\mu_q \big) & I - \dfrac{h}{2} G^T\mu_p 
        \end{pmatrix}
    \end{equation*}
    where $\Tilde{S}:= H_{qq} + G_q^T\mu$ is a symmetric matrix. We proceed in the same way as before: we compute
    \begin{equation*}
        {\varphi'}_{b}^{T} J \varphi'_b = \begin{pmatrix}
            -\dfrac{h}{2} \big(G^T \mu_q - \mu_q G \big) & I - \dfrac{h}{2} G^T \mu_p \\[2ex]
            I + \dfrac{h}{2} \mu_p G^T & 0
        \end{pmatrix},
    \end{equation*}
    and multiply it from the left and from the right by $\xi_1$ and $\xi_2$ respectively, with $\xi_1,\xi_2 \in T_{q_{n+1},p_{n+1/2}}\mathcal{M}$. We get
    \begin{equation*}
        \xi_1^T {\varphi'}_{b}^{T} J \varphi'_b \xi_2 = \xi_1^T J \xi_2,
    \end{equation*}
    meaning that $\varphi_b$ is conformal symplectic with parameter 1, i.e. symplectic. As a composition of two conformal symplectic transformation with parameters $e^{-\gamma h}$ and 1, the numerical flow defined by (\ref{SM order 1}) is $e^{-\gamma h}$-conformal symplectic.\\
    \textbf{Convergence of order 1.} By Taylor-expanding the continuous equations in the system (\ref{dissipative constrained Hamiltonian system}) and assuming $(q(t),p(t))$ is the solution of such system passing through $(q_n,p_n) \in \mathcal{M}$ at time $t_n$, we obtain
    \begin{equation*}
        \begin{split}
            q(t_{n+1}) &= q_n + h H_p(p_n) + \mathcal{O}(h^2), \\
            p(t_{n+1}) &= p_n - hH_q(q_n) - hG(q_n)^T\lambda(q_n,p_n) + \mathcal{O}(h^2).
        \end{split}
    \end{equation*}
    We want to compare the above equations with the corresponding ones given by the numerical scheme (\ref{SM order 1})
    \begin{equation*}
        \begin{split}
            q_{n+1} &= q_n + hH_p(p_{n+1/2}), \\
            p_{n+1} &= p_n - \frac{h}{2} H_q(q_n) - \frac{h}{2} G(q_n)^T\lambda_n - \frac{h}{2}H_q(q_{n+1}) - \frac{h}{2}G(q_{n+1})^T\mu_n.
        \end{split}
    \end{equation*}
    By considering the estimates
    \begin{equation*}
        p_{n+1/2} = p_n + \mathcal{O}(h), \quad h\lambda_n = \mathcal{O}(h), 
        \quad p_{n+1} = p_{n+1/2} + \mathcal{O}(h), \quad h\mu_n = \mathcal{O}(h),
    \end{equation*}
    given by the application of the implicit function theorem to prove existence and uniqueness of the numerical solution, we get
    \begin{equation*}
        q_{n+1} = q(t_{n+1}) + \mathcal{O}(h^2), \qquad 
        p_{n+1} = p(t_{n+1}) + G(q(t_{n+1}))^T \nu + \mathcal{O}(h^2),
    \end{equation*}
    where $\nu:= \frac{h}{2}(\lambda_n - \mu_n)$. Inserting these relations in the last equation of (\ref{SM order 1}) leads to
    \begin{equation*}
        \begin{split}
            0 =&\,\, G(q_{n+1}) H_p(q_{n+1},p_{n+1}) \\
            =&\,\, G(q(t_{n+1})) H_p(q(t_{n+1}),p(t_{n+1})) \\
            & + G(q(t_{n+1})) H_{pp}(q(t_{n+1}),p(t_{n+1}))G(q(t_{n+1}))^T \nu + \mathcal{O}(h^2).
        \end{split}
    \end{equation*}
    The first term is 0 as we are on $\mathcal{M}$ and the matrix $GH_{pp}G^T$ is invertible by (\ref{lambda}). Hence $\nu = \mathcal{O}(h^2)$ and 
    \begin{equation}
    \label{local error}
        q_{n+1} = q(t_{n+1}) + \mathcal{O}(h^2), \qquad 
        p_{n+1} = p(t_{n+1}) +  \mathcal{O}(h^2),
    \end{equation}
    is the local error of the method. We now have to compute the global error summing up the propagated local ones. We do this for the variable $q$, then the calculations for $p$ are identical and lead to the same result. Assuming $q(t)$ is the solution starting at $q_0$ in $t_0$, we want to know the global error in $t_{n+1}$, i.e. $e_{n+1} := q(t_{n+1})-q_{n+1}$. We have
    \begin{equation*}
        \begin{split}
            q(t_{n+1}) & = q(t_n) + hH_p(p(t_n)) + Dh^2, \\
            q_{n+1} & = q_n + hH_p(p_n)
        \end{split}
    \end{equation*}
    where $Dh^2$ is the local error from (\ref{local error}), for some $D\in\mathbb{R}$. Computing the difference of the above equations gives
    \begin{equation*}
        \begin{split}
            e_{n+1} & = e_n + h \big(H_p(p(t_n)) - H_p(p_n) \big) + Dh^2 \\[1ex]
            & = e_n + h H_p(\overline{p}) \big(p(t_n)-p_n\big) + Dh^2
            = \big(1+h H_p(\overline{p})\big) e_n +Dh^2
        \end{split}
    \end{equation*}
    for some $\overline{p}\in[p_n, p(t_n)]$, by the mean value theorem. Assuming there exists $L\in\mathbb{R}$ such that $||H_{pp}(p)|| \le L$ for all $p$, then
    \begin{equation*}
        |e_{n+1}| \le (1+hL) |e_n| + Dh^2.
    \end{equation*}
    As the error in $t_0$ is 0, we can compute
    \begin{equation*}
        \begin{split}
            e_0 & = 0, \\
            e_1 & = Dh^2, \\
            e_2 & = (1+hL) Dh^2 + Dh^2,\\
            e_3 & = (1+hL) \big((1+hL) Dh^2 + Dh^2 \big) + Dh^2,\\
            \vdots \\
            e_N & = \sum_{i=0}^{N-1} (1+hL)^i Dh^2 = \frac{(1+hL)^N -1}{1+hL-1} Dh^2 = \frac{(1+hL)^N -1}{L} Dh.
        \end{split}
    \end{equation*}
    By Taylor expansion, $1+hL \le e^{hL}$, and if we do a finite number of steps $N$, then $hN<\infty$. Therefore
    \begin{equation*}
        |e_n| \le \frac{e^{hLN}-1}{L} Dh \xrightarrow{h\to 0} 0.
    \end{equation*}
    and the method is convergent of order 1.
\end{proof}

\begin{theorem}
\label{properties SM 2}
    The numerical method (\ref{SM order 2}) has a unique solution, is symmetric, conformal symplectic with parameter $e^{-\gamma h}$ and convergent of order 2.
\end{theorem}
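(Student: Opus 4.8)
The plan is to exploit the splitting structure. Reading the scheme (\ref{SM order 2}) line by line, one checks that it is precisely the palindromic composition
\[
  \Psi_h^2 \;=\; \Phi_{h/2}^D\circ\Psi_h^C\circ\Phi_{h/2}^D,
\]
where $\Phi_t^D(q,p)=(q,e^{-\gamma t}p)$ is the \emph{exact} flow of the dissipative part (\ref{dissipative part}) and $\Psi_h^C$ is the RATTLE map (\ref{RATTLE separable}): prepending $p_n\mapsto e^{-\gamma h/2}p_n$ to RATTLE gives the first line of (\ref{SM order 2}), appending $p\mapsto e^{-\gamma h/2}p$ gives the fourth, and the remaining lines are RATTLE with step $h$ (the last equation being equivalent to $0=G(q_{n+1})H_p(p_{n+1})$ because $T$ is quadratic and so $H_p$ is linear). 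All four assertions can then be lifted from the two building blocks. For existence and uniqueness, the two $\Phi_{h/2}^D$-steps are explicit, so the only equations to solve are those of $\Psi_h^C$, which has a locally unique solution for small $h$ by Theorem~1.3 in \cite{3-540-30663-3}; equivalently, one repeats verbatim the implicit-function-theorem argument from the proof of Theorem~\ref{properties SM 1}, the only changes being that $e^{-\gamma h}$ becomes $e^{-\gamma h/2}$ and that a scalar prefactor $e^{-\gamma h/2}$ appears in the projection equation, neither of which affects the identity $F(p_n,0,0)=0$ or the invertibility of $\partial F/\partial(p,\nu)(p_n,0,0)$.

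For symmetry, $\Phi_t^D$ is the flow of a vector field, hence $\Phi_{-t}^D=(\Phi_t^D)^{-1}$, i.e.\ it is symmetric, and $\Psi_h^C$ is symmetric by Theorem~1.3 in \cite{3-540-30663-3}; a palindromic composition of symmetric maps is symmetric, so $\Psi_h^2$ is symmetric. For conformal symplecticity, the constant Jacobian of $\Phi_t^D$ is $\diag(I,e^{-\gamma t}I)$, so $\diag(I,e^{-\gamma t}I)^{T}\,J\,\diag(I,e^{-\gamma t}I)=e^{-\gamma t}J$ and $\Phi_t^D$ is conformal symplectic with parameter $e^{-\gamma t}$ (equivalently, the field (\ref{dissipative part}) is conformal with parameter $-\gamma$ in the sense of (\ref{conformal VF}), so Proposition~1, item~1, in \cite{mclachlan2001conformal} applies), while $\Psi_h^C$ is symplectic (parameter $1$) by Theorem~1.3 in \cite{3-540-30663-3}. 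Conformal parameters multiply under composition: if ${\varphi'}^{T}J\varphi'=c_\varphi J$ and ${\psi'}^{T}J\psi'=c_\psi J$ on the relevant tangent spaces, then, using that $\varphi'$ maps $T_x\mathcal{M}$ into $T_{\varphi(x)}\mathcal{M}$, one obtains ${(\psi\circ\varphi)'}^{T}J(\psi\circ\varphi)'=c_\psi c_\varphi J$; therefore $\Psi_h^2$ is conformal symplectic with parameter $e^{-\gamma h/2}\cdot 1\cdot e^{-\gamma h/2}=e^{-\gamma h}$. Alternatively, one may redo the two-block computation of Theorem~\ref{properties SM 1}: the map $(q_n,p_n)\mapsto(q_{n+1},p_{n+1/2})$ now carries parameter $e^{-\gamma h/2}$ and the projection $(q_{n+1},p_{n+1/2})\mapsto(q_{n+1},p_{n+1})$ also carries $e^{-\gamma h/2}$, the product again being $e^{-\gamma h}$.

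For convergence of order $2$, the quickest route is that $\Psi_h^2$ is a consistent symmetric one-step method, and a consistent symmetric method has even order, hence order $\ge 2$; consistency (order $\ge 1$) holds because a Taylor expansion of (\ref{SM order 2}) reproduces the right-hand side of (\ref{dissipative constrained Hamiltonian system}) to first order, $\Psi_h^C$ being consistent with the conservative flow and $\Phi_{h/2}^D$ exact. Alternatively, mirroring the proof of Theorem~\ref{properties SM 1}, one Taylor-expands both the exact solution through $(q_n,p_n)\in\mathcal{M}$ and the scheme; symmetry of the splitting cancels the $\mathcal{O}(h^2)$ part of the local error, the estimates $p_{n+1/2}=e^{-\gamma h/2}p_n+\mathcal{O}(h)$, $h\lambda_n=\mathcal{O}(h)$, $p_{n+1}=p_{n+1/2}+\mathcal{O}(h)$, $h\mu_n=\mathcal{O}(h)$ from the implicit function theorem are used, and the last equation together with the invertibility of $GH_{pp}G^{T}$ from (\ref{lambda}) eliminates the spurious $G^{T}\nu$ term, leaving a local error $\mathcal{O}(h^3)$; then, under the same regularity $\|H_{pp}\|\le L$ as in Theorem~\ref{properties SM 1}, the global error obeys $|e_{n+1}|\le(1+hL)|e_n|+Dh^3$ and the same geometric-sum argument gives $|e_n|\le\frac{e^{hLN}-1}{L}Dh^2\xrightarrow{h\to 0}0$, of order $h^2$.

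I expect the main obstacle to be the constraint-manifold bookkeeping rather than any single delicate estimate: one must check that $\Phi_t^D$ maps $\mathcal{M}$ into $\mathcal{M}$ (which holds here because $T(p)=\frac{1}{2}p^{T}M^{-1}p$ gives $H_p(e^{-\gamma t}p)=e^{-\gamma t}H_p(p)$, so the hidden constraint $G(q)H_p(p)=0$ is preserved), that the conformal identity is to be read for the restricted form $\omega|_{\mathcal{M}}$ acting on pairs of vectors in $T_x\mathcal{M}$, and --- for the order statement via the explicit route --- that the local error is genuinely $\mathcal{O}(h^3)$ rather than only $\mathcal{O}(h^2)$, which is exactly the point where the symmetry of the palindromic splitting is used. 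These are the same technical issues already handled for the RATTLE scheme in \cite{3-540-30663-3}, section~VII, and they transfer with only the modifications indicated above.
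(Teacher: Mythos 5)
Your proof is correct, and for three of the four claims (existence/uniqueness via the implicit function theorem, conformal symplecticity via composing a parameter-$e^{-\gamma h/2}$ map with another, and order~$2$ from consistency plus the fact that symmetric methods have even order) it follows essentially the same route as the paper, which simply points back to the proof of Theorem~\ref{properties SM 1} with the two factors $e^{-\gamma h/2}$ in place of $e^{-\gamma h}$ and $1$. Where you genuinely differ is the symmetry argument and the overall framing. The paper proves symmetry by brute force: it performs the substitution $h\leftrightarrow -h$, $(q_{n+1},p_{n+1})\leftrightarrow(q_n,p_n)$, $\lambda_n\leftrightarrow\mu_n$ in (\ref{SM order 2}) and checks line by line that the same system reappears. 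You instead observe that the concrete equations realize the palindromic composition $\Phi_{h/2}^D\circ\Psi_h^C\circ\Phi_{h/2}^D$ of self-adjoint maps (the exact flow $\Phi^D$ and the symmetric RATTLE map), so symmetry is inherited abstractly; this is cleaner, makes all four properties consequences of properties of the two building blocks, and has the side benefit of exposing that the displayed splitting formula $\Psi_h^2=\Psi_{h/2}^C\circ\Psi_h^D\circ\Psi_{h/2}^C$ in the paper has the roles of $C$ and $D$ interchanged relative to the actual equations (\ref{SM order 2}) --- your reading is the one consistent with the scheme the theorem is about. The price of the abstract route is exactly the bookkeeping you flag yourself: one must verify that $\Phi_t^D$ preserves $\mathcal{M}$ (true here since $H_p$ is linear in $p$) and that tangent maps carry $T_x\mathcal{M}$ to the correct tangent space when multiplying conformal parameters, points the paper's hands-on computation handles implicitly.
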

\begin{proof}
    \textbf{Existence and uniqueness of the numerical solution.} The proof is very similar to the one in Theorem \ref{properties SM 1} for the SM of order 1. \\
    \textbf{Symmetry.} To prove the symmetry of the method (\ref{SM order 2}) we make a step back in time from $(q_{n+1}, p_{n+1})$, i.e. we exchange
    \begin{equation*}
        h \leftrightarrow -h, \qquad p_{n+1} \leftrightarrow p_n \qquad q_{n+1} \leftrightarrow q_n \qquad \lambda_{n} \leftrightarrow \mu_n.
    \end{equation*}
    The system becomes
    \begin{equation*}
        \begin{split}
        p_{n+1/2} &= e^{+\gamma h/2} p_{n+1} + \frac{h}{2} \big(H_q(q_{n+1}) + G(q_{n+1})^T \mu_n \big), \\[1ex]
        q_{n} &= q_{n+1} + h H_p(p_{n+1/2}), \\[1.3ex]
        0 &= g(q_{n}), \\
        p_{n} &= e^{+\gamma h/2} \Big( p_{n+1/2} + \frac{h}{2} \big(H_q(q_{n}) + G(q_{n})^T \lambda_n \big) \Big), \\[1ex]
        0 &= G(q_{n}) H_p (p_{n}).
        \end{split}
    \end{equation*}
    The third and the last equations are satisfied thanks to the consistency conditions of the initial value $(q_n, p_n)$. The second one is already identical to the second line of (\ref{SM order 2}), whereas the first and the fourth are just a rewriting of the fourth and the first in (\ref{SM order 2}), respectively.\\
    \textbf{Conformal symplecticity.} The proof is very similar to the one in Theorem \ref{properties SM 1} for the SM of order 1. The only difference is that here, the mapping $\varphi_a$ and $\varphi_b$ are both $e^{-\gamma h/2}$-conformal symplectic. Their composition, though, gives again a conformal symplectic transformation with parameter $e^{-\gamma h/2}e^{-\gamma h/2}=e^{-\gamma h}$.\\
    \textbf{Convergence of order 2.} Convergence of order 1 can be proved in the same way as in \ref{properties SM 1}. Convergence of order 2 follows by the symmetry of the method, as the order of a symmetric method is always even.
\end{proof}

\subsubsection{Adaptive scheme}
Instead of keeping the stepsize $h$ constant, we consider the adaptive stepsize routine proposed in \cite{wadia2021optimization} that is suitable for optimization problems. It is based on a proportional (P) control which sets the size of $h$ according to two parameters to be chosen in advance. In particular, assume $h_n$ and $x_{n} = (q_{n}, p_{n})$ are the $n^{th}$ stepsize and approximated solution respectively. Then we compute the next iterate with the first order integrator
\begin{equation*}
    x_{n+1}^1 = \Psi_{h_n}^1(q_{n}, p_{n}),
\end{equation*}
and compare it with the one obtained via the second order one
\begin{equation*}
    x_{n+1}^2 = \Psi_{h_n}^2(q_{n}, p_{n}),
\end{equation*}
to get
\begin{equation*}
    \delta_n(x_n, h_n) = || x_{n+1}^1 - x_{n+1}^2 ||.
\end{equation*}
\begin{remark}
    The method used to make the comparison has to be of higher order than the chosen one. In \cite{wadia2021optimization} the Euler discretization is compared with the Heun's one (also known as midpoint rule), which is more accurate.
\end{remark} 
The size of the next time step is then computed according to the rule
\begin{equation}
    h_{n+1} = \bigg(\frac{r}{\delta_n} \bigg)^{\theta/2} h_n,
\end{equation}
where $r$ is the desired error between $x_{n+1}^1$ and $x_{n+1}^2$ and $\theta$ is a gain. Concerning the choice of the parameters:
\begin{itemize}
    \item $\theta\in[0,2]$, and $\theta=0$ takes us back to a constant step size. As $\theta$ increases, a larger change in $h$ is allowed, potentially leading to convergence in fewer steps. The controller is sensitive with respect to changes in $\theta$ and choosing it larger than 0.01 may lead to oscillatory behaviour.
    \item $r\le0.6$ and changing it does not affect the controller as strongly as $\theta$. 
\end{itemize}
Some constraints on the minimum/maximum size of $h$ can be imposed according to the stability boundaries of the chosen method.

\section{Numerical results: optimization on the sphere}
In the next session we present some numerical results. As an example-problem we have chosen an optimization problem on the sphere. We want to find the minimum of the function
\begin{equation*}
    f(q) = q^T A q, 
\end{equation*}
where $A\in\mathbb{R}^{d\times d}$ is a symmetric matrix and $q$ is a point on $S^{d-1} = \{x \in \mathbb{R}^d \, : \, ||x||^2 = 1\}$. We then see $f$ as the potential energy of a Hamiltonian system and build the Hamiltonian function adding a momentum-dependent kinetic energy $T$:
\begin{equation*}
    H(q,p) = f(q) + T(p) = q^T A q + \frac{1}{2} p^T M^{-1} p,
\end{equation*}
where $M$ is the mass matrix.

\subsection{Results of the proposed method}
Here we present some results regarding the proposed method only. First we discuss the choice of the parameters. Then we compare fixed and adaptive stepsizes and the symplectic method of order 1 and 2.

\subsubsection{Choice of the parameters}
Some of the parameters are fixed for all the simulations. In particular:
\begin{itemize}
    \item[-] $A$ is built randomly from its largest and smallest eigenvalues (these extreme eigenvalues turn out to play a role in the results),
    \item[-] $d=10$,
    \item[-] $q_0 = (0,0,0,0,0,1,0,0,0,0)$,
    \item[-] $p_0 = (1,1,1,1,1,0,1,1,1,1)$,
    \item[-] $M = I$,
    \item[-] tolerance of $10^{-6}$ (the solution is compared with the one provided by a library, e.g. np.linalg.eig($A$)).
\end{itemize}
The parameters $\gamma$ and $h$, i.e. the dissipation and the stepsize, affect the performance of the method and we study what happens when changing them. We leave the discussion about $h$ for later, when we test also the adaptive stepsize method, and we first focus on $\gamma$. We define 
\begin{equation*}
    I_\lambda : = \lambda_{max} - \lambda_{min},
\end{equation*}
as we will refer often to this interval.

\textbf{Choice of the parameter of dissipation $\gamma$.}
We show some results of the method applied to a range of different matrices $A$ to get an intuition on how to choose $\gamma$, keeping $h$ fixed. By looking at the plots in Figure \ref{gamma plots} we observe that it is not easy to choose an optimal $\gamma$. As a general tendency we can say that when the eigenvalues range in a small interval $I_\lambda$, then also the values of $\gamma$ allowing for convergence are smaller than when $I_\lambda$ increases. Unfortunately, there seems to be no rule to choose it even when these two eigenvalues are fixed, as we can see in Figure \ref{gamma for same eigenvalues}. 
\begin{figure}[tb]
    \centering
    \includegraphics[width = 0.3\textwidth]{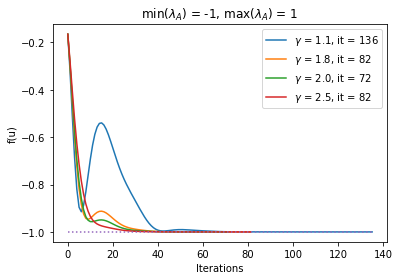}
    \includegraphics[width = 0.3\textwidth]{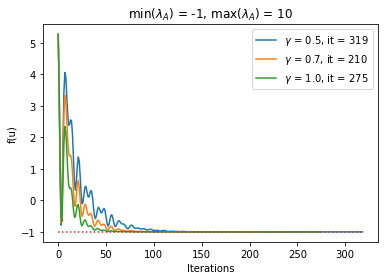}
    \includegraphics[width = 0.3\textwidth]{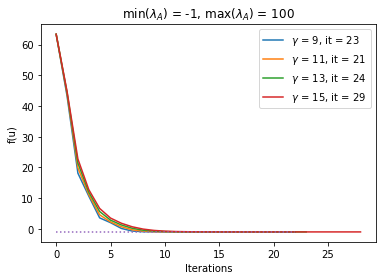}\\
    \includegraphics[width = 0.3\textwidth]{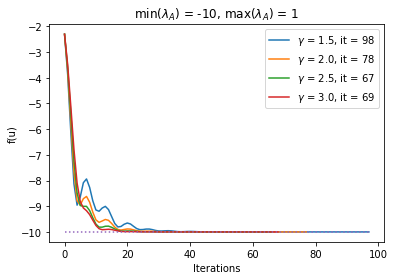}
    \includegraphics[width = 0.3\textwidth]{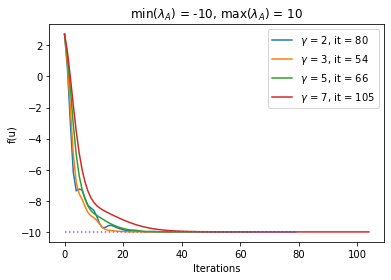}
    \includegraphics[width = 0.3\textwidth]{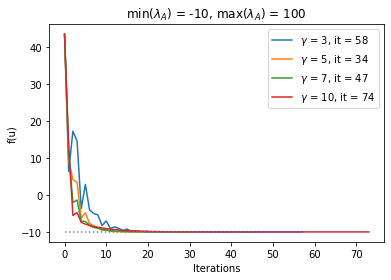}\\
    \includegraphics[width = 0.3\textwidth]{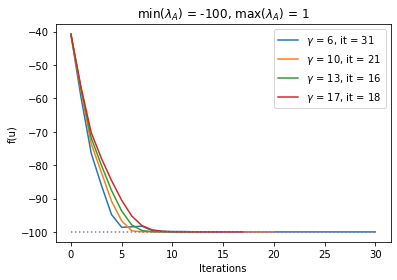}
    \includegraphics[width = 0.3\textwidth]{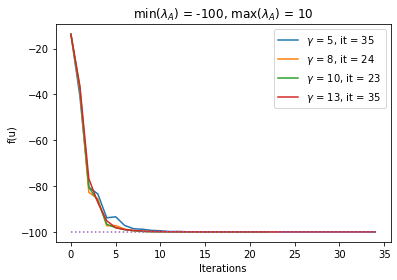}
    \includegraphics[width = 0.3\textwidth]{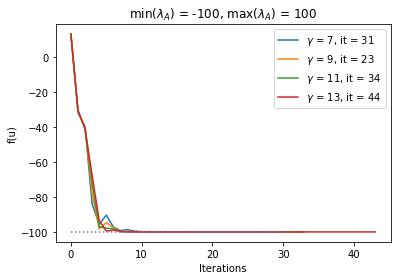}
    \caption{\small Results for the SM order 1, for different matrices $A$ and different parameters $\gamma$. The stepsize is 0.1 in all cases except for the last one where it is 0.09.}
    \label{gamma plots}
\end{figure}
\begin{figure}[tb]
    \centering
    \includegraphics[width = 0.3\textwidth]{plots/dissipation_hfix_SM1_Am1_AM1.png}
    \includegraphics[width = 0.3\textwidth]{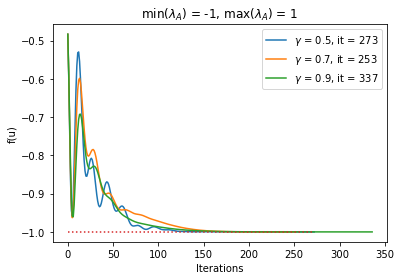}
    \includegraphics[width = 0.3\textwidth]{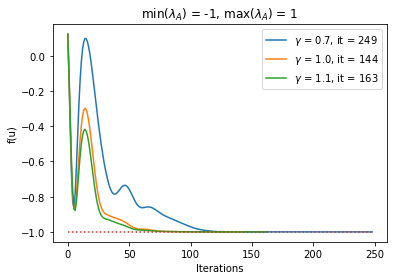}
    \caption{\small Results for the SM order 1, for different matrices $A$ with same $\lambda_{min}$ and $\lambda_{max}$ and different parameters $\gamma$. The stepsize is 0.1.}
    \label{gamma for same eigenvalues}
\end{figure}


\subsubsection{Comparing fixed and adaptive stepsize}
Before discussing comparison between the splitting method and its adaptive stepsize version, we need to comment the choice of the parameters in the latter one. In particular, $r$ (the desired error) is fixed to 0.06, while $\theta$ (the gain) is initially set to 0.001 and then increased until the code produces error or oscillations appear. As $h_0$ we consider 0.1 as long as $I_\lambda < 200$, and reduce it to $0.09$ for larger intervals $I_\lambda$. We can observe the results in Figure \ref{h plots}: the adaptive scheme works better than the original splitting method of order 1, when the latter is performed using $h = h_0$. A great difference in the number of iterations can be noticed especially when $I_\lambda$ is small, while as it increases, the methods become comparable. Also, as a general tendency we can say that smaller $I_\lambda$ requires more iterations than larger $I_\lambda$. Still in Figure \ref{h plots} are shown the results for the fixed stepsize method when increasing $h$ as much as possible for each particular case. The behavior of $h$ with respect to $I_\lambda$ appears to be the opposite than $\gamma$'s one: smaller $I_\lambda$ allows for larger stepsizes, whereas bigger $I_\lambda$ requires smaller $h$. 
\begin{figure}[tb]
    \centering
    \includegraphics[width = 0.3\textwidth]{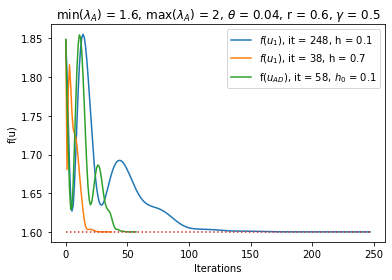}
    \includegraphics[width = 0.3\textwidth]{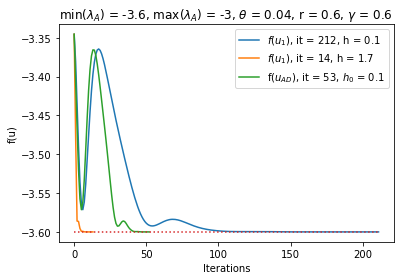}
    \includegraphics[width = 0.3\textwidth]{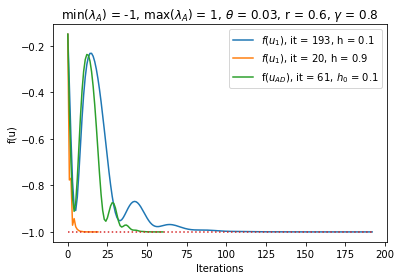}\\
    \includegraphics[width = 0.3\textwidth]{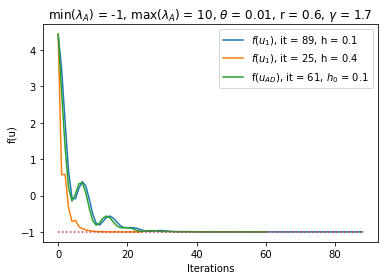}
    \includegraphics[width = 0.3\textwidth]{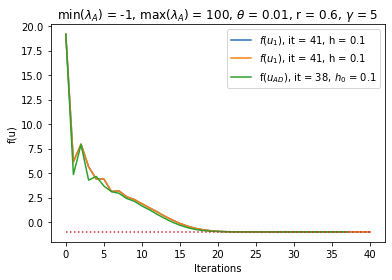}
    \includegraphics[width = 0.3\textwidth]{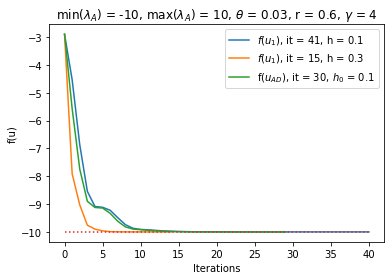}\\
    \includegraphics[width = 0.3\textwidth]{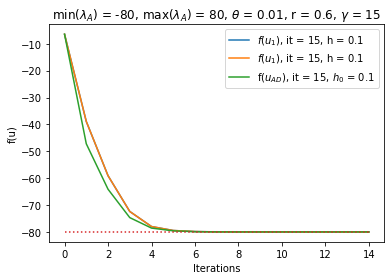}
    \includegraphics[width = 0.3\textwidth]{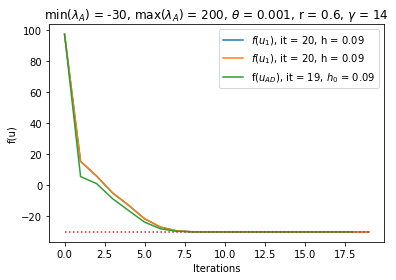}
    \includegraphics[width = 0.3\textwidth]{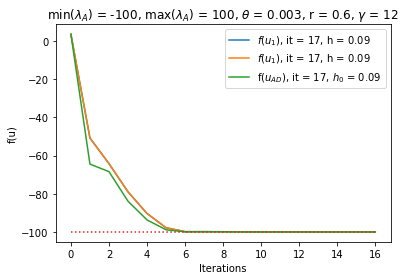}
    \caption{\small Results for the SM order 1 and for the adaptive scheme (AD), for different matrices $A$ and different stepsizes $h$.}
    \label{h plots}
\end{figure}

\subsubsection{Comparing SM order 1 and SM order 2}
As one can see from the results in Figure \ref{kinetic energy plots}, there is basically no difference in the number of iterations when using the splitting method of order 1 or 2. The evolution of $f$ is slightly different but as the aim is to optimize and not to compute an accurate trajectory, choosing either one works equally well.
\begin{figure}[tb]
    \centering
    \includegraphics[width = 0.3\textwidth]{plots/dissipation_hfix_SM1_Am1_AM1.png}
    \includegraphics[width = 0.3\textwidth]{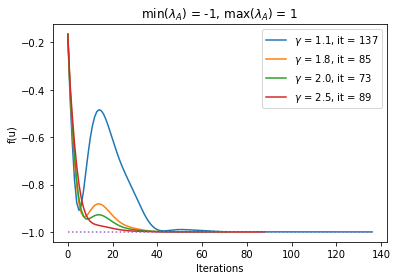}
    \caption{\small Results for the SM order 1 and of order 2.}
    \label{kinetic energy plots}
\end{figure}

\subsection{Comparing the proposed method with the usual gradient descent}
In this subsection we compare the proposed method and the usual gradient descent (GD) one. Before showing the results we present and discuss the GD method applied to our problem.

\subsubsection{GD method}
The GD method aims at finding the minimum of $f$ by following the direction given by its negative gradient. As we need to remain on the tangent plane of a sphere, a projection is performed. The resulting continuous equation reads
\begin{equation}
    \label{GD continuous}
    \Dot{q} = - (I - q q^T) \nabla f,
\end{equation}
and its discretization is
\begin{equation}
    \label{GD discretized}
    q_{n+1} = \frac{q_n - h \Big(I - q_n q_n^T \Big) \nabla f(q_n)}{||q_n - h \Big(I - q_n q_n^T \Big) \nabla f(q_n)||}.
\end{equation}
The normalization allows for the iterates to be on the unit sphere.

\subsubsection{Analysis of the GD method}
\label{Analysis of the GD method}
We try to do the analysis of the gradient descent scheme. To do so, we consider the function giving the next iterate
\begin{equation*}
    F(q) = \frac{q - h \big(I - q \, q^T \big) \nabla f(q)}{||q - h \big(I - q \, q^T \big) \nabla f(q)||}.
\end{equation*}
We aim at finding the minimizer $q^*$, that is a fixed point such that $F(q^*) = q^*$. We define the error at the $n^{th}$ iteration as
\begin{equation*}
    e_{n+1} = F(q_{n}) - F(q^*).
\end{equation*}
A Taylor expansion up to the first order (i.e. a linearization) of $F$ centered at the point $x_n$ gives
\begin{equation*}
    F(q^*) \approx F(q_n) + DF(q_n) \, (q^* - q_n),
\end{equation*}
leading to the approximation
\begin{equation*}
    e_n \approx DF(q_n) \, e_{n-1}.
\end{equation*}
It follows that the error decreases as the spectrum of the matrix $DF(q_n)$ is strictly smaller than 1. Letting $v=k q - 2hAq$, $c=q^TAq$, $k=1+2hq^TAq$, the Jacobian of $F$ is
\begin{equation*}
\begin{split}
    DF(q) &= \frac{1}{||v||} \bigg(k - 2h \,A + 4h \,q q^T A \bigg) \\
    & - \frac{1}{||v||^3} \bigg(k^3 qq^T - 2hk^2 Aqq^T - 8ckh^2\,qq^TA \bigg) \\
    & - \frac{1}{||v||^3} \bigg(4kh^2qq^TA^2 + 16ch^3Aqq^TA - 8h^3Aqq^TA^2\bigg).
\end{split}
\end{equation*}
The above expression is quite complex and, by studying it analytically, we could not get to any condition on the stepsize $h$ so that $\rho(DF(q))$ is always smaller than 1. We therefore opted for studying it numerically by looking at $\rho(DF(q_n))$ for different iterates. In Figure \ref{GD evolution of rho} we see the evolution of $\rho(DF(q))$ for different values of $h$. In all cases shown, $\rho(DF(q))>1$ for the first iterations, but decreases at each step. In the cases tested with $h<1$ (first two plots of Figure \ref{GD evolution of rho}), at some point $\rho(DF(q))$ becomes smaller than 0. Then
\begin{itemize}
    \item if $h$ is small enough, $\rho(DF(q))$ keeps decreasing until it reaches asymptotically a value between 0 and 1.
    \item There is a "critical" value of $h$, namely $h_c$: for $h>h_c$ we have that $\rho(DF(q))$ starts increasing from a certain iteration on and converges to a value between 0 and 1. 
    \item There is a "limiting" value of $h$, say $h_l$, for which $\rho(DF(q))$ tends to $1$, and if $h \ge h_l$ we do not have convergence.
\end{itemize}
\begin{figure}[tb]
    \centering
    \includegraphics[width = 0.3\textwidth]{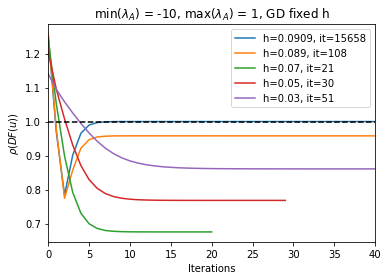}
    \includegraphics[width = 0.3\textwidth]{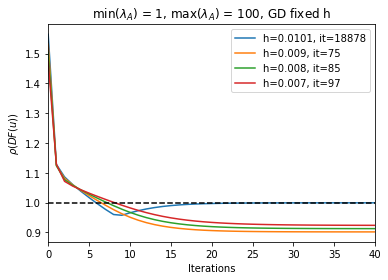}
    \includegraphics[width = 0.3\textwidth]{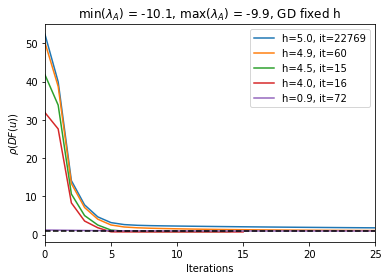}
    \caption{\small Examples of the evolution of $\rho(DF(q))$ for different values of $h$.}
    \label{GD evolution of rho}
\end{figure}
We left aside the cases in which we do not observe this behaviour (e.g. third plot in Figure \ref{GD evolution of rho}) as this study is just to find a rule for choosing $h$ and the result we get ends up holding true also for these cases.\\
Intuitively, one would like to chose $h$ the largest possible so that the number of iterations is reduced. Still, such $h$ must ensure convergence: we therefor need to find out the limiting value $h_l$. Experiments has led us to think that there is a relation between $h_l$ and the matrix $A$. In particular, we have found out that 
\begin{equation*}
    \rho(DF(q)) \cdot h \longrightarrow \frac{1}{\lambda_{max}(A) - \lambda_{min}(A)}
\end{equation*}
as $h \to h_l$, where $\lambda_{max}(A)$ and $\lambda_{min}(A)$ are the largest and smallest eigenvalues of $A$ respectively. Since we have seen above that $\rho(DF(q)) \to 1$, we get
\begin{equation}
    h_l =  \frac{1}{\lambda_{max}(A) - \lambda_{min}(A)}.
\end{equation}
In table \ref{limiting h table} we show some data we have collected. We have run multiple times the code for different matrices $A$, playing with the largest and smallest eigenvalues. For each $A$ we looked for the maximum value of $h$, $h_{max}$, allowing for convergence (according to our machines), that was a truncation of $h_l$. As, for such $h_{max}$, convergence was rather slow and the number of iteration was quite high, we tried to find a way to choose an \textit{optimal} $h$, $h_{opt}$, to improve our results. Truncating $h_l$ after its first nonzero digit works well (see Table \ref{limiting h table}). Still, there are some exceptions:
\begin{itemize}
    \item if $h_l \ge 1$ we take its integer part and subtract 0.1:
    \begin{equation*}
        \text{e.g. if } h_l = 3.4 \, \Rightarrow \text{ we take } h_{opt} = 2.9,
    \end{equation*}
    (3 would also work but it would not in the case $h_l = 3.0$, we made this choice to avoid having many cases).
    \item if $h_l < 1$ and the digit after its first nonzero digit is again zero, we proceed similarly as above:
    \begin{equation*}
        \text{e.g. if } h_l = 0.5 \, \Rightarrow \text{ we take } h_{opt} = 0.49.
    \end{equation*}
\end{itemize}
\begin{remark}
    Our choice of $h_{opt}$ is just a possibility: it is not proved to give the best $h$ possible, but it does ensure convergence in a reasonable amount of iterations. Also, we note that the number of iterations in Table \ref{limiting h table} is specific for each matrix $A$: choosing another one with the same $\lambda_{max}$ and $\lambda_{min}$ would lead to same values of $h$ but different number of iterations.
\end{remark}
\begin{table}[ht]
\small
\caption{\small Data resulting by running the GD method. The matrix $A$ has fixed largest and smallest eigenvalues and then is built randomly. The value $h_{max}$ is the largest values of $h$ allowing for convergence (according to our tests and our machines), whereas $h_{opt}$ is the value of $h$ which we propose to use. It ensures convergences and in most of the cases, it reduces significantly the number of iterations with respect to larger $h$.}
\label{limiting h table}
\renewcommand\arraystretch{1.2}
\noindent\[
\begin{array}{|c|c|c|c|c|c|c|}
\hline
\lambda_{min}(A)& \lambda_{max}(A) & h_l & h_{max} & \text{it} & h_{opt} & \text{it} \\
\hline
-100 & -10 & 0.0\overline{1} & 0.0111 & 1098 & 0.01 & 30 \\
-100 & 100 & 0.005 & 0.0049999 & 5388 & 0.0049 & 85 \\
-10 & 1 & 0.090909 & 0.0909 & 10394 & 0.09 & 110 \\
-10 & 100 & 0.0090909 & 0.00909 & 21472 & 0.009 & 300 \\
-1 & 1 & 0.5 & 0.49999 & 28256 & 0.49 & 72 \\
-1 & 100 & 0.0099009 & 0.0099 & 18292 & 0.009 & 62 \\
1 & 10 & 0.\overline{1} & 0.1109 & 1171 & 0.1 & 76 \\
1 & 100 & 0.010101 & 0.0101 & 21040 & 0.01 & 288 \\
-3 & 8 & 0.090909 & 0.0909 & 15413 & 0.09 & 202 \\
-27 & 58 & 0.01176 & 0.01175 & 1748 & 0.01 & 141 \\
-10.1 & -9.9 & 5 & 5 & 23403 & 4.9 & 35 \\
\hline
\end{array}
\]
\end{table}

\subsubsection{Comparing GD and SM} We are now ready to compare the proposed method with the usual gradient descent. In Figure \ref{comparison plots} we can look at the results when applying the SM of order one, the adaptive scheme (AD) and the GD method. For the SM we choose $h=0.1$, which is also the initial $h_0$ for AD; whereas for GD the stepsize is computed according to the rule proposed in section \ref{Analysis of the GD method}. Comparing the performances in terms of number of iterations: in most cases, the adaptive scheme and GD are either comparable, or the first is faster than the latter. These images confirm that the proposed method is a valid alternative to the usual gradient descent one.
\begin{figure}[tb]
    \centering
    \includegraphics[width = 0.3\textwidth]{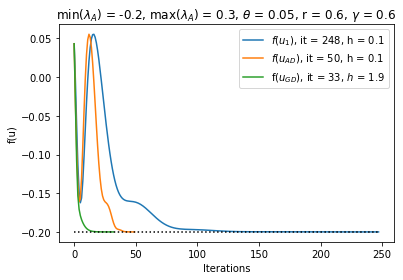}
    \includegraphics[width = 0.3\textwidth]{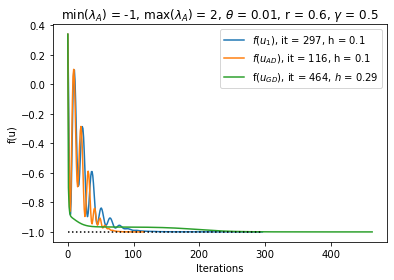}
    \includegraphics[width = 0.3\textwidth]{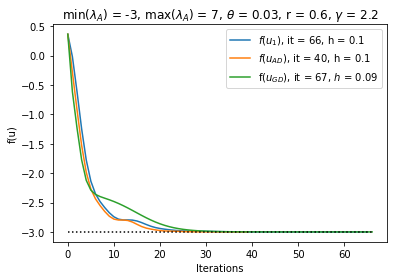}\\
    \includegraphics[width = 0.3\textwidth]{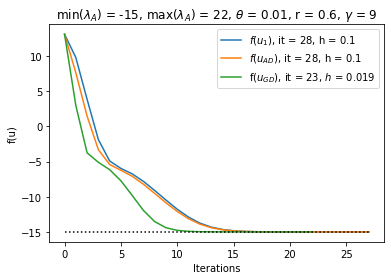}
    \includegraphics[width = 0.3\textwidth]{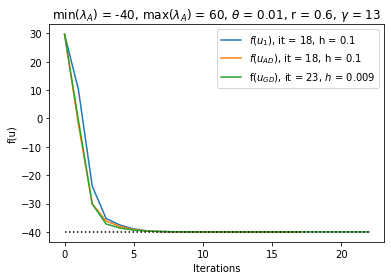}
    \includegraphics[width = 0.3\textwidth]{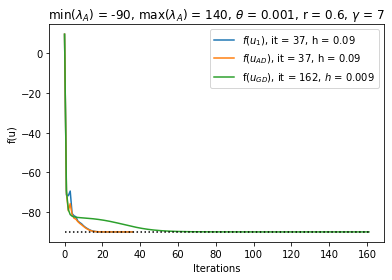}
    \caption{\small Results for the SM of order 1, the adaptive scheme (AD) and the GD method for different matrices $A$.}
    \label{comparison plots}
\end{figure}

\section{Conclusion and possible future directions}
In this work we have presented a conformal symplectic method suitable for the manifold setting. We have proposed two versions of it, of order one and two, and proved the respective properties. As a third option, we have formulated an adaptive stepsize version. We have tested all the schemes on an example, a function defined on a sphere, and in such setting they all resulted being a valid alternative to the usual gradient descent method. We tried numerically to optimize the parameters but, unfortunately, we could not establish a rule to choose them in this specific setting. It would be desirable to test the methods also on other well known examples, involving other compact manifolds such as the orthogonal group or the Stiefel manifold, and more oriented to the field of machine learning and deep learning. \\
A further work could be to build the conformal Hamiltonian system using a different kinetic energy, e.g. the relativistic one, and formulate a conformal symplectic method from it. That would mean to attempt the generalisation of the relativistic gradient descent method proposed in \cite{francca2020conformal} for the Euclidean case to the manifold setting.

\section{Acknowledgments}
I would like to thank my supervisors for the helpful discussions, advice and support.

\bibliographystyle{siamplain}
\bibliography{biblio}

\end{document}